\documentclass{article}

\usepackage{newa4}
\usepackage{latexsym}
\usepackage{amssymb}
\usepackage{amsmath} 
\usepackage{amsthm}
\usepackage{amsfonts}
\usepackage{mathabx}

\usepackage[T1]{fontenc}
\usepackage[utf8]{inputenc}

\usepackage{autobreak}
\usepackage{graphicx}

%

\newcommand{\NN}{\mathbb{N}}

\newcommand{\DD}{\mathbb{D}}

\newcommand{\RR}{\mathbb{R}}

\newcommand{\BBB}{\mathcal{B}}

\newcommand{\LLL}{\mathcal{L}}

\newcommand{\NNN}{\mathcal{N}}

\newcommand{\QQQ}{\mathcal{Q}}
\newcommand{\SSS}{\mathcal{S}}
\newcommand{\TTT}{\mathcal{T}}
\newcommand{\UUU}{\mathcal{U}}

\newcommand{\XXX}{\mathcal{X}}

\newcommand{\ball}[3]{\mathrm{B}_{#1}(#2,#3)}
\newcommand{\cball}[3]{\mathrm{B}^\le_{#1}(#2,#3)}

\newcommand{\set}[2]{\mbox{$\{\,#1 \mid #2 \,\}$}}
\newcommand{\fun}[3]{\mbox{$#1 \colon #2 \rightarrow #3$}}
\newcommand{\pfun}[3]{\mbox{$#1 \colon #2 \rightharpoonup #3$}}

\newcommand{\pair}[1]{\langle #1 \rangle}
\newcommand{\conv}{\mathclose\downarrow}

\def\pt{\mathop{\mathstrut\rm pt}}

\newtheorem{theorem}{Theorem}[section]
\newtheorem{lemma}[theorem]{Lemma}
\newtheorem{proposition}[theorem]{Proposition}
\newtheorem{corollary}[theorem]{Corollary}
\theoremstyle{definition}
\newtheorem{definition}[theorem]{Definition}

\numberwithin{equation}{section}

\newtheorem{example}[theorem]{Example}


%
                                      {\end{list}}

\def\dom{\mathop{\mathstrut\rm dom}}
\def\range{\mathop{\mathstrut\rm range}}

\def\int{\mathop{\mathstrut\rm int}\nolimits}
\def\ext{\mathop{\mathstrut\rm ext}\nolimits}
\def\cl{\mathop{\mathstrut\rm cl}\nolimits}

\title{Bi-topological spaces and the Continuity Problem\thanks{
\protect\includegraphics[width=1em]{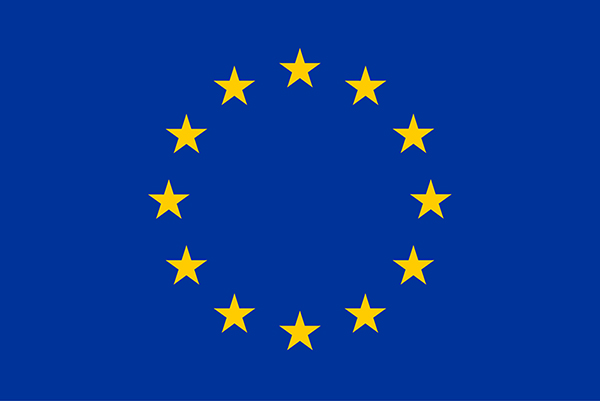}
This project has received funding from the European Union's Horizon 2020 research and innovation programme under the Marie Sk{\l}odowska-Curie grant agreement No 731143.}}

\author{Dieter Spreen\\
Department of Mathematics, University of Siegen\\
57068 Siegen, Germany}

\date{}

\begin{document}

 \maketitle

\begin{abstract}
The \emph{Continuity Problem} is the question whether effective operators are continuous, where an effective operator $F$ is a function on a space of constructively given objects $x$, defined by mapping construction instructions for $x$ to instructions for $F(x)$ in a computable way. In the present paper the problem is dealt with in a bi-topological setting. To this end the  topological setting developed by the author~\cite{sp} is extended to the bi-topological case. Under very natural conditions it is shown that an effective operator $F$ between bi-topological spaces $\TTT = (T, \tau, \sigma)$ and $\TTT' = (T', \tau', \sigma')$ is (effectively) continuous, if $\tau'$ is (effectively) regular with respect to $\sigma'$. A central requirement on $\TTT'$ is that bases of the neighbourhood filters of the points in $T'$ can computably be enumerated in a uniform way, not only with respect to topology $\tau'$, but also with respect to $\sigma'$. As follows from an example by Friedberg, the last condition is indispensable. Conversely, it is proved that (effectively) bi-continuous operators are effective. Prominent examples of bi-topological spaces are quasi-metric spaces. Under a very reasonable computability requirement on the quasi-metric it is shown that all effectivity assumptions made in the general results are satisfied in the quasi-metric case.
\end{abstract}

\section{Introduction}
\label{sec-intro}

Whereas in classical mathematics, one can prove the existence of a mathematical object without ``finding'' the object explicitly, by assuming its non-existence and then deriving a contradiction, in a constructive approach a \emph{construction} of the object has to be presented and it has to be shown that the object thus constructed has the property initially stated. But what is a construction? In the Russian school of constructive mathematics (founded by A.A.\ Markov, Jr.\ in the late 1940s-early 1950s) a construction is an \emph{algorithm}. For the latter notion many equivalent formalisations are known.

Let us consider the real numbers as an example. In classical real analysis, one way to define a real number is to establish an equivalence class of Cauchy sequences of rational numbers. In Markov-style constructive mathematics accordingly a real number is constructed by presenting a program $P$ that takes a positive integer $n$ and outputs a rational $a_{n}$, together with a program $Q$ that takes a positive integer $n$ and outputs a positive integer $N_{n}$ such that
\[
(\forall n)(\forall i, j \ge N_{n})\, |a_{i} -a_{j}| \le 2^{-n}.
\]
We can use $P$ and $Q$ together to compute as close a rational approximation as we like to the real number they represent.

We see that in Markov-style constructive analysis real numbers can be represented by pairs of programs.  Note that programs are just pieces of text. An \emph{effective operator}  $F$ from Markov reals to Markov reals (or, in general, between sets of constructive objects) is then a map $F$ for which there is a program translating programs $A_{x}$ representing objects $x$ in the domain of $F$ into programs $B_{F(x)}$ representing $F(x)$. 

The \emph{continuity problem} is the question whether effective operators are continuous.

For several classes of effectively presented spaces positive answers have been given: Classes of continuous domains (directed-complete partial orders) have been studied in \cite{ec,er,ms,st,wd} and of metric spaces in \cite{ce,ksl,mosch63,mosch64}. In \cite{sp} the present author studied the problem in a general topological setting. A natural condition was presented forcing effective operators between $T_{0}$ spaces to be continuous. The condition is satisfied in the cases just mentioned, i.e., continuous domains and metric spaces. Moreover, in the latter case the result could be extended to regular spaces.

In the present paper we will push the limit even further and show that effective operators between pairwise regular bi-topological spaces are bi-continuous. 

The generalisation to bi-topological spaces $(T, \tau, \sigma)$ is a natural step. As is well known from~\cite{be75,be76,be77,bs}, the continuity of an effective operator $\fun{F}{T}{T'}$ cannot be derived in a constructive setting without \emph{Markov's principle}, which says for a computable map $\alpha$,
\[
\neg \neg (\exists x\, \alpha\, x = 0) \Rightarrow (\exists x\, \alpha\, x = 0).
\]
Thus, for any $y \in T$, we have to assume that $F$ is not continuous at $y$. That is, for $V \in \tau'$ with $F(y) \in V$, we assume that
\[
(\forall U \in \tau) [y \in U \Rightarrow F[U] \not\subseteq V].
\]
In order to derive a contradiction, we need $F[U] \not\subseteq V$ to hold constructively, which means we can effectively find $z \in U$ so that $F(z) \notin V$. Regularity of $(T', \tau')$ allows to find some  $\widehat{V} \in \tau'$ with $F(y) \in \widehat{V} \subseteq \cl(\widehat{V}) \subseteq V$, where $\cl(\widehat{V})$ is the closure of $\widehat{V}$. Then $F[U]$ hits the exterior $\ext(\widehat{V})$ of $\widehat{V}$.  A central assumption for a space to be effectively presented is that for any (basic) open set its elements are effectively listable. So, by running through the listings for $U$ and $\widehat{V}$ we can effectively find a $z \in U$ with $F(z) \in \ext(\widehat{V})$. 

If we  assume that $\tau'$ is regular with respect to $\sigma'$ then we can proceed in a similar way. The only difference is that now we have that $\widehat{V} \in \sigma'$ with $\widehat{V} \supseteq T \setminus V$.

The paper is organised as follows: In Section~\ref{sec-bitop} bi-topological spaces and their regularity are discussed. As a prominent example,  quasi-pseudo-metric spaces are considered in Section~\ref{sec-qmet}. In Section~\ref{sec-effspa} effectivity requirements for topological spaces are recalled from~\cite{sp} and extended to the bi-topological case. Computable quasi-metric spaces are studied in Section~\ref{sec-compqm}. The general continuity results are derived in Section~\ref{sec-cont} and in Section~\ref{sec-fried}  the consequences of Friedberg's counterexample are discussed. The paper closes with a conclusion.

\section{Bi-topological spaces}\label{sec-bitop}

As is well known, on every topological space $(T, \tau)$ a canonical pre-order $\le_{\tau}$ is defined, called the \emph{specialisation pre-order}, which is a partial order exactly if $\tau$ satisfies the $T_{0}$ separation property. For $y, z \in T$, $y \le_{\tau} z$, if every  $O \in \tau$ containing $y$ also contains $z$. 

A \emph{bi-topological space} $\TTT = (T, \tau, \sigma)$ is a set $T$ with two topologies $\tau$ and $\sigma$ on $T$. The space $\TTT^{*} = (T, \sigma, \tau)$ is called its \emph{dual}.

\begin{example}\label{ex-realtop}
A typical example of a bi-topological space is the space $(\RR, \UUU, \LLL)$ with the \emph{upper} and the \emph{lower} topology, that is
\[
\UUU = \set{(a, \infty)}{a \in \RR}  \cup \{ \RR, \emptyset \} \quad\text{and}\quad \LLL = \set{(-\infty, a)}{a \in \RR} \cup \{ \RR, \emptyset \}.
\]
Both topologies do not have the $T_{2}$ separation property. However, they satisfy the $T_{0}$ condition. The specialisation order $\le_{\UUU}$ of $\UUU$ is the canonical order $\le$ on the real numbers, whereas $\le_{\LLL}$ is its converse $\ge$. 
The usual Euclidean topology on $\RR$ is the join $\UUU \vee \LLL$ of both topologies.
\end{example}

For two topologies $\tau$ and $\sigma$ the join $\tau \vee \sigma$ is generated by the intersections $U \cap V$ with $U \in \tau$ and $V \in \sigma$. It is also called the \emph{symmetrisation topology} of $\tau$ and $\sigma$, and $\TTT^{S} = (T, \tau \vee \sigma)$ is the \emph{symmetrisation space} of $\TTT$~\cite{ko95}.

\begin{definition}(\cite{ke})\label{dn-reg}
Let $\TTT = (T, \tau, \sigma)$ be a bi-topological space. 
\begin{enumerate}

\item\label{dn-reg-1} $\tau$ is \emph{regular with respect to} $\sigma$ if, for each point $x \in T$ and every $\tau$-closed subset $X$ with $x \notin X$, there are a $\tau$-open set $U$ and a $\sigma$-open set $V$ such that
\[
x \in U, \quad V\supseteq X, \quad\text{and}\quad U \cap V = \emptyset.
\]

\item\label{dn-reg-2} $\TTT$ is, or $\tau$ and $\sigma$ are, \emph{pairwise regular} if $\tau$ is regular with respect to $\sigma$ and vice versa.

\end{enumerate}
\end{definition}

\begin{example}\label{ex-pregreal}
$(\RR, \UUU, \LLL)$ is pairwise regular.
\end{example}

For a subset $X$ of $T$, let $\overline{X}$ denote its complement.

\begin{lemma}\label{lem-regbas}
Let $\TTT = (T, \tau, \sigma)$ be a bi-topological space with bases $\BBB_{\tau}$, $\BBB_{\sigma}$ of $\tau$ and $\sigma$, respectively. Then the following three statements are equivalent:
\begin{enumerate}

\item\label{lem-regbas-1}
$\tau$ is regular with respect to $\sigma$.

\item\label{lem-regbas-2}
For all $x \in T$ and  $B \in \BBB_{\tau}$ with $x \in B$, there is  $U \in \BBB_{\tau}$ so that 
\[
x \in U \subseteq B
\]
 and for all $z \in \overline{B}$, there is  $V_{z} \in \BBB_{\sigma}$ with 
 \[
 z \in V_{z} \quad\text{and}\quad V_{z} \cap U = \emptyset.
 \]

\item\label{lem-regbas-3}
For all $x \in T$ and  $B \in \BBB_{\tau}$ with $x \in B$, there are $U \in \BBB_{\tau}$ and $V \in \sigma$ so that
\[
x \in U \subseteq B, \quad \overline{B} \subseteq V, \quad\text{and}\quad U \cap V = \emptyset.
\]

\end{enumerate}
\end{lemma}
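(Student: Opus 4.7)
The plan is to establish the equivalences cyclically: (\ref{lem-regbas-1}) $\Rightarrow$ (\ref{lem-regbas-2}) $\Rightarrow$ (\ref{lem-regbas-3}) $\Rightarrow$ (\ref{lem-regbas-1}). Statement (\ref{lem-regbas-3}) is just a ``basic-open-set'' reformulation of (\ref{lem-regbas-1}), while (\ref{lem-regbas-2}) is its pointwise version on the $\sigma$-side; so the only real work is to shuttle between arbitrary open sets and basic ones using that $\BBB_{\tau}$ and $\BBB_{\sigma}$ are bases.

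For (\ref{lem-regbas-1}) $\Rightarrow$ (\ref{lem-regbas-2}), I take $x \in B \in \BBB_{\tau}$ and apply regularity to the $\tau$-closed set $X := \overline{B}$, which avoids $x$. This yields $U' \in \tau$ and $V' \in \sigma$ with $x \in U'$, $X \subseteq V'$, and $U' \cap V' = \emptyset$. Since $U' \cap B \in \tau$ contains $x$, I shrink it to a basic set $U \in \BBB_{\tau}$ with $x \in U \subseteq U' \cap B \subseteq B$, and for each $z \in \overline{B}$ I extract $V_{z} \in \BBB_{\sigma}$ with $z \in V_{z} \subseteq V'$; disjointness $V_{z} \cap U \subseteq V' \cap U' = \emptyset$ is then immediate.

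For (\ref{lem-regbas-2}) $\Rightarrow$ (\ref{lem-regbas-3}), given $x \in B \in \BBB_{\tau}$, I pick $U$ and the family $(V_{z})_{z \in \overline{B}}$ as in (\ref{lem-regbas-2}) and set
\[
V := \bigcup_{z \in \overline{B}} V_{z}.
\]
Then $V \in \sigma$, every $z \in \overline{B}$ lies in its own $V_{z} \subseteq V$, and the pointwise disjointness $V_{z} \cap U = \emptyset$ for every $z$ upgrades to $V \cap U = \emptyset$.

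For (\ref{lem-regbas-3}) $\Rightarrow$ (\ref{lem-regbas-1}), I consider arbitrary $x \in T$ and a $\tau$-closed $X$ with $x \notin X$. Since $\overline{X} \in \tau$ contains $x$, there is $B \in \BBB_{\tau}$ with $x \in B \subseteq \overline{X}$, so that $X \subseteq \overline{B}$. Applying (\ref{lem-regbas-3}) to this $B$ produces $U \in \BBB_{\tau} \subseteq \tau$ and $V \in \sigma$ with $x \in U$, $X \subseteq \overline{B} \subseteq V$, and $U \cap V = \emptyset$, which is exactly regularity of $\tau$ with respect to $\sigma$. The main (minor) obstacle throughout is simply keeping track of which objects must lie in a basis versus the full topology; no separation or closure arguments beyond the definition are needed.
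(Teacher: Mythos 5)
Your proof is correct and follows essentially the same route as the paper's: the same cyclic scheme (\ref{lem-regbas-1}) $\Rightarrow$ (\ref{lem-regbas-2}) $\Rightarrow$ (\ref{lem-regbas-3}) $\Rightarrow$ (\ref{lem-regbas-1}), applying regularity to the $\tau$-closed set $\overline{B}$, shrinking to basic sets via the basis property, and taking the union $\bigcup_{z} V_{z}$ for the middle implication. No substantive differences.
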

\begin{proof}
Assume (\ref{lem-regbas-1}). We will show (\ref{lem-regbas-2}). Let $x \in T$ and $B \in \BBB_{\tau}$. Then there are $\widehat{U} \in \tau$ and $\widehat{V} \in \sigma$ so that $x \in \widehat{U}$, $\widehat{U} \cap \widehat{V} = \emptyset$, and $\overline{B} \subseteq \widehat{V}$. Since $x \in \widehat{U} \cap B$, there is  $U \in \BBB_{\tau}$ with $x \in U \subseteq \widehat{U} \cap B$. Let $z \in \overline{B}$. Then $z \in \widehat{V}$. Hence, there exists $V_{z} \in \BBB_{\sigma}$ with $z \in V_{z} \subseteq \widehat{V}$. It follows that $U \cap V_{z} \subseteq \widehat{U} \cap \widehat{V} = \emptyset$.

Next, assume (\ref{lem-regbas-2}) and let  $x \in T$ and  $B \in \BBB_{\tau}$ with $x \in B$. Then there exists $U \in \BBB_{\tau}$ so that  $x \in U \subseteq B$. Moreover, there is $V_{z} \in \BBB_{\sigma}$ with $ z \in V_{z} \quad\text{and}\quad V_{z} \cap U = \emptyset$, for every $z \in \overline{B}$. Set $V = \bigcup_{z \in \overline{B}} V_{z}$. Then $V \in \sigma$ and $\overline{B} \subseteq V$. In addition, $U \cap V = \bigcup_{z \in \overline{B}} V_{z} = \emptyset$. That is, (\ref{lem-regbas-3}) holds.

Finally, assume (\ref{lem-regbas-3}) and let $x \in T$ with $x \notin X$, where $X \subseteq T$ is $\tau$-closed. Then $\overline{X} \in \tau$ and $x \in \overline{X}$. It follows that there is  $B \in \BBB_{\tau}$ with $x \in B \subseteq \overline{X}$. With (\ref{lem-regbas-3}) we therefore obtain that there are $U \in \BBB_{\tau}$ and $V \in \sigma$ with $x \in U \subseteq B$, $X = \overline{\overline{X}} \subseteq V$, and $U \cap V = \emptyset$, which shows that $\tau$ is regular with respect to $\sigma$.
\end{proof}

\section{Quasi-pseudo-metric spaces}\label{sec-qmet}

In this section we will consider an important subclass of bi-topological spaces.

\begin{definition}\label{dn-qmet}
A \emph{quasi-pseudo-metric} on $X$ is a function $\delta \colon X \times X \to [0,\infty)$ such that for all $x, y, z \in X$,
\begin{enumerate}

\item\label{dn-qmet-1}
$\delta(x, x) = 0$,

\item\label{dn-qmet-2}
$\delta(x, z) \le \delta(x, y) + \delta(y, z)$.

\end{enumerate}
If $\delta$ satisfies the stronger condition
\begin{enumerate}

\item[(\ref{dn-qmet-1}')] $\delta(x, y) = \delta(y, x) = 0$ only if $ x = y$,

\end{enumerate}
then $\delta$ is a \emph{quasi-metric}.
\end{definition}

Every quasi-pseudo-metric defines a pre-order $\sqsubseteq_{\delta}$ on $X$: $y \sqsubseteq_{\delta} z$, if $\delta(y, z) = 0$.

Let $\delta$ be a quasi-pseudo-metric on $X$. Then $\XXX = (X, \delta)$ is called \emph{quasi-pseudo-metric space}. Define $\delta^{c}$ by $\delta^{c}(x, y) = \delta(y, x)$. As is easily verified, $\delta^{c}$ is a quasi-pseudo-metric as well, called the \emph{conjugate} quasi-pseudo-metric of $\delta$. Moreover, $\XXX^{c} = (X, \delta^{c})$ is said to be the \emph{dual} of $\XXX$.

\begin{example}\label{ex-realmet}
For real numbers $y, z$ set 
\[
\delta_{L}(y, z) = \max \{ z - y, 0 \} \quad \text{and} \quad
\delta_{U}(y, z) = - \min  \{ z - y, 0 \}.
\]
Then $\delta_{L}$ and $\delta_{U}$ are both quasi-metrics such that $\delta_{L}$ is conjugate to $\delta_{U}$. Moreover, $y \sqsubseteq_{\delta_{L}} z$ exactly if $y \ge z$, and $y \sqsubseteq_{\delta_{U}} z$ exactly if $y \le z$.
\end{example}

For $x \in X$ and $r \in \RR_{+}$, let $\ball{\delta}{x}{r} = \set{y \in X}{\delta(x, y) < r}$ be the ball of radius $r$ about $x$, and $\cball{\delta}{x}{r} = \set{y \in X}{\delta(x, y) \le r}$. As in the classical case of metric spaces, the collection of all balls forms a basis for a topology $\tau_{\delta}$, called the \emph{quasi-pseudo-metric topology} of $\delta$. Its specialisation pre-order $\le_{\tau_{\delta}}$ coincides with the canonical pre-order $ \sqsubseteq_{\delta}$ associated with $\delta$. In Example~\ref{ex-realtop}, $\tau_{\delta_{L}} = \LLL$ and $\tau_{\delta_{U}} = \UUU$.

Similarly, $\delta^{c}$ determines a topology $\tau_{\delta^{c}}$ for $X$. Thus the natural topological structure associated with a quasi-pseudo-metric on a set $X$ is that of set $X$ with two topologies $\tau_{\delta}$ and $\tau_{\delta^{c}}$. By \cite[Prop.~4.1]{ke}, for every fixed $x \in X$, $\delta(x, y)$ is a $\tau_{\delta}$-upper semi-continuous and $\tau_{\delta^{c}}$-lower semi-continuous function of $y$. 

Set $\delta^{*}(x, y) = \max \{ \delta(x,y), \delta^{c}(x, y) \}$. Then $\delta^{*}$ is a pseudo-metric on $X$, called the \emph{symmetrisation} of $\delta$. The topology $\tau_{\delta^{*}}$ generated by $\delta^{*}$ is equivalent to the symmetrisation topology of $\tau_{\delta}$.

\begin{lemma}\label{lem-dbase}
Let $\QQQ$ be a dense subset of $X$ with respect to $\tau_{\delta^{*}}$ and $\DD = \set{2^{-e}}{e \in \NN}$. Then 
\[
\set{\ball{\delta}{u}{p}}{u \in \QQQ \wedge p \in \DD}
\]
is a basis of $\tau_{\delta}$.
\end{lemma}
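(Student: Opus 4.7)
The plan is to reduce to the standard basis of $\tau_\delta$ consisting of all $\delta$-balls $\ball{\delta}{y}{r}$ with $y\in X$ and $r>0$. It suffices to show that for every such $y,r$ and every $x\in \ball{\delta}{y}{r}$ one can find $u\in\QQQ$ and $p\in\DD$ so that
\[
x\in\ball{\delta}{u}{p}\subseteq\ball{\delta}{y}{r}.
\]

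Given $x,y,r$ with $\delta(y,x)<r$, set $\varepsilon=r-\delta(y,x)>0$ and pick $p\in\DD$ with $2p<\varepsilon$, which is possible since $\DD$ contains arbitrarily small positive reals. By density of $\QQQ$ in the symmetrisation topology $\tau_{\delta^{*}}$, the $\delta^{*}$-ball of radius $p$ around $x$ meets $\QQQ$; choose $u\in\QQQ$ with $\delta^{*}(x,u)<p$. In particular $\delta(u,x)\le\delta^{*}(x,u)<p$, so $x\in\ball{\delta}{u}{p}$. For every $z\in\ball{\delta}{u}{p}$, two applications of the triangle inequality together with $\delta(x,u)\le\delta^{*}(x,u)<p$ give
\[
\delta(y,z)\le\delta(y,x)+\delta(x,u)+\delta(u,z)<(r-\varepsilon)+p+p<r,
\]
so $z\in\ball{\delta}{y}{r}$. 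This establishes the required inclusion.

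The only step that requires care is keeping $\delta(x,u)$ and $\delta(u,x)$ apart: the inclusion $x\in\ball{\delta}{u}{p}$ involves $\delta(u,x)$, whereas the triangle inequality chain used to prove $\ball{\delta}{u}{p}\subseteq\ball{\delta}{y}{r}$ involves $\delta(x,u)$. This asymmetry is precisely why density of $\QQQ$ in the weaker topology $\tau_\delta$ alone would not suffice; density in the symmetrisation $\tau_{\delta^{*}}$ controls both $\delta$ and $\delta^{c}$ simultaneously. The factor $2$ in the choice $2p<\varepsilon$ absorbs the two contributions $\delta(x,u)<p$ and $\delta(u,z)<p$.
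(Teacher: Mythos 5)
Your proof is correct and follows essentially the same route as the paper's: both arguments pick a radius $p\in\DD$ with $2p$ below $r-\delta(y,x)$, use density of $\QQQ$ in $\tau_{\delta^{*}}$ to find $u\in\QQQ$ close to $x$ in both $\delta$ and $\delta^{c}$, and then use one direction for $x\in\ball{\delta}{u}{p}$ and the other in the triangle-inequality chain giving $\ball{\delta}{u}{p}\subseteq\ball{\delta}{y}{r}$. The only cosmetic difference is that the paper factors the inclusion through an intermediate ball around $x$, whereas you run the triangle inequality in a single chain.
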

\begin{proof}
By definition, $\tau_{\delta}$ is generated by the balls $\ball{\delta}{x}{r}$ with $x \in X$ and $r \in \RR_{+}$. Let $y \in \ball{\delta}{x}{r}$. Then $\delta(x, y) < r$. Let $a_{y} \in \DD$ with $a_{y} \le r - \delta(x, y)$ and choose $u_{y} \in \ball{\delta}{y}{a_{y}/2} \cap \ball{\delta^{c}}{y}{a_{y}/2} \cap \QQQ$. We show that
\begin{enumerate}

\item\label{dn-dbase-1}
$y \in \ball{\delta}{u_{y}}{a_{y}/2}$,

\item\label{dn-dbase-2}
$\ball{\delta}{u_{y}}{a_{y}/2} \subseteq \ball{\delta}{y}{a_{y}}$,

\item\label{dn-dbase-3}
$\ball{\delta}{y}{a_{y}} \subseteq \ball{\delta}{x}{r}$.

\end{enumerate}
Then 
\[
\ball{\delta}{x}{r} = \bigcup \set{\ball{\delta}{u_{y}}{a_{y}/2}}{y \in \ball{\delta}{x}{r} \wedge a_{y} \in \DD \wedge a_{y} \le r - \delta(x, y)},
\]
from which the statement follows.

(\ref{dn-dbase-1}) By choice, $u_{y} \in \ball{\delta^{c}}{y}{a_{y}/2}$. That is, $\delta(u_{y}, y) < a_{y}/2$.

(\ref{dn-dbase-2}) Let $z \in \ball{\delta}{u_{y}}{a_{y}/2}$, i.e., $\delta(u_{y}, z) < a_{y}/2$. Then
\[
\delta(y, z) \le \delta(y, u_{y}) + \delta(u_{y}, z) < a_{y}/2 + a_{y}/2 = a_{y}.
\]

(\ref{dn-dbase-3}) Let $z \in \ball{\delta}{y}{a_{y}}$. Then 
\[
\delta(x, z) \le \delta(x, y) + \delta(y, z) < \delta(x, y) + a_{y} \le r. \qedhere
\]
\end{proof}

Note that in (\ref{dn-dbase-3}) we even have that $\cball{\delta}{y}{a_{y}} \subseteq \ball{\delta}{x}{r}$.
The proposition that we are going to derive next is a special case of \cite[Prop.\ 4.2]{ke}. We will give a direct proof which we will refer to later on.

\begin{lemma}\label{lem-qmetreg}
Let $\QQQ$ be a dense subset of $X$ with respect to $\tau_{\delta^{*}}$. Then for every $\hat{u} \in \QQQ$, $r \in \RR_{+}$, $x \in \ball{\delta}{\hat{u}}{r}$, and $p \in \RR_{+}$ with $p < r - \delta(\hat{u}, x)$, thus there is  $u \in \QQQ$ so that
\begin{enumerate}

\item\label{lem-qmetreg-1}
$x \in \ball{\delta}{u}{p/2} \subseteq \ball{\delta}{\hat{u}}{r}$,

\end{enumerate}
and  for
\[
V = \bigcup \set{\ball{\delta^{c}}{v}{q/2}}{v \in \QQQ \wedge q \in \DD \wedge \delta(u, v) > p/2 \wedge q < \delta(u, v) - p/2},
\]
\begin{enumerate}
\stepcounter{enumi}

\item\label{lem-qmetreg-2}
$\overline{\ball{\delta}{\hat{u}}{r}} \subseteq V$, as well as

\item\label{lem-qmetreg-3}
$\ball{\delta}{u}{p/2} \cap V = \emptyset$.

\end{enumerate}
\end{lemma}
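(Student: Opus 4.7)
The strategy is to use density of $\QQQ$ in $\tau_{\delta^{*}}$ to pick an appropriate $u$ close to $x$ in the symmetrisation, and then verify (\ref{lem-qmetreg-1})--(\ref{lem-qmetreg-3}) in the order (\ref{lem-qmetreg-1}), (\ref{lem-qmetreg-3}), (\ref{lem-qmetreg-2}).

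For the choice of $u$, I would take any $u \in \QQQ$ with $\delta^{*}(x, u) < p/2$; density in $\tau_{\delta^{*}}$ guarantees one exists. The inequality $\delta(u, x) < p/2$ immediately places $x$ in $\ball{\delta}{u}{p/2}$, while $\delta(x, u) < p/2$ combined with the triangle inequality and the hypothesis $p < r - \delta(\hat{u}, x)$ gives $\delta(\hat{u}, u) < r - p/2$, from which the inclusion $\ball{\delta}{u}{p/2} \subseteq \ball{\delta}{\hat{u}}{r}$ in (\ref{lem-qmetreg-1}) follows by another triangle step through $u$. For (\ref{lem-qmetreg-3}) I would argue by contradiction: if $w$ were to lie in both $\ball{\delta}{u}{p/2}$ and a contributing ball $\ball{\delta^{c}}{v}{q/2}$ of $V$, then $\delta(w, v) = \delta^{c}(v, w) < q/2$, and the triangle inequality forces $\delta(u, v) < p/2 + q/2$, directly contradicting the defining condition $q < \delta(u, v) - p/2$.

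The substance of the lemma lies in (\ref{lem-qmetreg-2}). Given $z \in \overline{\ball{\delta}{\hat{u}}{r}}$, the bound $\delta(\hat{u}, u) < r - p/2$ from the first step together with $\delta(\hat{u}, z) \ge r$ yields $t := \delta(u, z) - p/2 > 0$. I would then pick $q \in \DD$ with $q < t/2$ (possible since $\DD$ contains arbitrarily small positive dyadics) and invoke density once more to choose $v \in \QQQ$ with $\delta^{*}(z, v) < q/4$. A reverse triangle inequality then gives
\[
\delta(u, v) - p/2 \ge \delta(u, z) - \delta(v, z) - p/2 > t - q/4 > q,
\]
which simultaneously verifies $\delta(u, v) > p/2$ and $q < \delta(u, v) - p/2$, so $(v, q)$ contributes to the union defining $V$; and $\delta^{c}(v, z) = \delta(z, v) < q/4 < q/2$ places $z$ inside $\ball{\delta^{c}}{v}{q/2} \subseteq V$.

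The delicate point I expect is the dyadic constraint $q \in \DD$: unlike the radii in purely metric arguments, $q$ cannot be tuned continuously. The remedy is to leave generous multiplicative slack at each approximation step (here $q < t/2$ and then $\delta^{*}(z, v) < q/4$), so that all the mutually dependent inequalities close with room to spare and the existence of a valid dyadic $q$ becomes automatic.
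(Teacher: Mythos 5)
Your proof is correct and follows essentially the same route as the paper's: choose $u \in \QQQ$ with $\delta^{*}(x,u) < p/2$ by density in the symmetrisation, get (\ref{lem-qmetreg-1}) and (\ref{lem-qmetreg-3}) by triangle inequalities, and for (\ref{lem-qmetreg-2}) fix a small dyadic $q$ and a point $v \in \QQQ$ close to $z$ in $\delta^{*}$ so that the pair $(v,q)$ contributes to $V$ and captures $z$. The only cosmetic difference is that the paper secures $\delta(u,v) > q + p/2$ by drawing $v$ from the $\tau_{\delta^{c}}$-open set $\{\, y \mid \delta(u,y) > q + p/2 \,\}$ intersected with the two balls around $z$, whereas you derive the same inequality from the reverse triangle inequality by leaving the extra $q/4$ of slack; both arguments close identically.
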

\begin{proof} 
Let $\hat{u} \in \QQQ$, $r \in \RR_{+}$, $x \in \ball{\delta}{\hat{u}}{r}$, and $p < r - \delta(\hat{u}, x)$. By our assumption on $\QQQ$, there is then some $u \in \QQQ$ so that
\[
\delta(\hat{u}, u) < p/2, \,\, \delta(u, \hat{u}) < p/2, \,\, x \in \ball{\delta}{u}{p/2}, \,\,\text{and}\,\, \cball{\delta}{u}{p/2} \subseteq \ball{\delta}{\hat{u}}{r},
\]
which proves Statement (\ref{lem-qmetreg-1}).

Next, let $z \in \overline{\ball{\delta}{\hat{u}}{r}}$. Then $z \in \overline{\cball{\delta}{u}{p/2}}$. That is, $\delta(u, z) > p/2$. Pick $q \in \DD$ with $q < \delta(u, z) - p/2$. Then $\set{y \in X}{\delta(u,y) > q + p/2}$, $\ball{\delta^{c}}{z}{q/2} \in \tau_{\delta^{c}}$ and $\ball{\delta}{z}{q/2} \in \tau_{\delta}$. Moreover, the intersection of the three sets is not empty. Hence, there is some 
\[
v \in \QQQ \cap \ball{\delta}{z}{q/2} \cap  \ball{\delta^{c}}{z}{q/2} \cap \set{y \in X}{\delta(u,y) > q + p/2}.
\]
It follows that $\delta(u, v) > q + p/2$. Moreover,
\begin{equation}\label{eq-qmetreg-1}
z \in \ball{\delta^{c}}{v}{q/2} \quad\text{and}\quad \ball{\delta^{c}}{v}{q/2} \subseteq \ball{\delta^{c}}{z}{q}.
\end{equation}

Now, assume that $y \in \cball{\delta}{u}{p/2} \cap \ball{\delta^{c}}{z}{q}$. Then $\delta(u, y) \le p/2$ and $\delta^{c}(z, y) < q$. Hence,
\begin{align*}
\delta(u, z)
& \le \delta(u, y) + \delta(y, z) \\
& = \delta(u, y) + \delta^{c}(z, y) \\
& < p/2 + q \\
&\le p/2 + (\delta(u, z) - p/2) = \delta(u, z).
\end{align*}
That is, $\delta(u, z) < \delta(u, z)$, a contradiction. Thus, 
\begin{equation}\label{eq-qmetreg-2}
\cball{\delta}{u}{p/2} \cap \ball{\delta^{c}}{z}{q} =\emptyset.
\end{equation} 
With (\ref{eq-qmetreg-1}) we therefore obtain that
\[
\ball{\delta}{u}{p/2} \cap \ball{\delta^{c}}{v}{q/2} = \emptyset.  
\]

As a further consequence of (\ref{eq-qmetreg-2}) it follows that 
\[
\ball{\delta^{c}}{v}{q/2} \subseteq \overline{ \cball{\delta}{u}{p/2}},
\]
which ensures that $\delta(u, v) > p/2$. Hence, we have that Statement (\ref{lem-qmetreg-2}) holds.

Finally, assume that $v \in \QQQ$ with $\delta(u, v) > p/2$ and $q < \delta(u, v) - p/2$. Then it follows as above that
\[
\ball{\delta}{u}{p/2} \cap \ball{\delta^{c}}{v}{q/2} = \emptyset,
\]
from which Statement (\ref{lem-qmetreg-3}) is a consequence.
\end{proof}

It results that $\tau_{\delta}$ is regular with respect to $\tau_{\delta^{c}}$. The next outcome follows by duality.

\begin{proposition}\label{pn-qmetreg}
Let $(X, \delta)$ be quasi-pseudo-metric space. Then the associated bi-topological space $(X, \tau_{\delta}, \tau_{\delta^{c}})$ is pairwise regular.
\end{proposition}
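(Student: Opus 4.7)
The plan is to harvest Lemma~\ref{lem-qmetreg} as essentially doing all the work, and then invoke duality for the other half. First I would note that the set $X$ itself is trivially dense in $\tau_{\delta^{*}}$, so Lemma~\ref{lem-dbase} (taking $\QQQ = X$) gives a basis $\BBB_{\tau_{\delta}}$ of $\tau_{\delta}$ consisting of balls $\ball{\delta}{u}{p}$ with $u \in X$ and $p \in \DD$, and analogously a basis $\BBB_{\tau_{\delta^{c}}}$.

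Next I would verify the condition of Lemma~\ref{lem-regbas}(\ref{lem-regbas-3}) for $\tau = \tau_{\delta}$ and $\sigma = \tau_{\delta^{c}}$. Given $x \in T$ and a basic $\tau_{\delta}$-open $B = \ball{\delta}{\hat{u}}{r} \in \BBB_{\tau_{\delta}}$ with $x \in B$, pick any $p \in \DD$ with $p < r - \delta(\hat{u}, x)$; Lemma~\ref{lem-qmetreg} (again with $\QQQ = X$) then produces $u \in X$ and a $\tau_{\delta^{c}}$-open set $V$ such that $x \in \ball{\delta}{u}{p/2} \subseteq \ball{\delta}{\hat{u}}{r}$, $\overline{\ball{\delta}{\hat{u}}{r}} \subseteq V$, and $\ball{\delta}{u}{p/2} \cap V = \emptyset$. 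Since $\ball{\delta}{u}{p/2} \in \BBB_{\tau_{\delta}}$, the hypotheses of Lemma~\ref{lem-regbas}(\ref{lem-regbas-3}) are satisfied, so $\tau_{\delta}$ is regular with respect to $\tau_{\delta^{c}}$.

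For the converse direction I would appeal to duality: $\delta^{c}$ is itself a quasi-pseudo-metric whose conjugate $(\delta^{c})^{c}$ is $\delta$. Applying the argument just completed to $\delta^{c}$ in place of $\delta$ yields that $\tau_{\delta^{c}}$ is regular with respect to $\tau_{(\delta^{c})^{c}} = \tau_{\delta}$. Combining both directions gives pairwise regularity of $(X, \tau_{\delta}, \tau_{\delta^{c}})$.

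I do not expect any real obstacle here: the heavy lifting is in Lemma~\ref{lem-qmetreg}, and the only matters of care are (i) matching its output to the basis-level formulation in Lemma~\ref{lem-regbas} and (ii) checking that the quasi-pseudo-metric axioms are manifestly symmetric in $\delta \leftrightarrow \delta^{c}$, so that the duality step is genuinely automatic rather than requiring a separate argument.
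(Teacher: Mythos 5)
Your proposal is correct and follows essentially the same route as the paper, which likewise derives regularity of $\tau_{\delta}$ with respect to $\tau_{\delta^{c}}$ directly from Lemma~\ref{lem-qmetreg} (matched against the basis criterion of Lemma~\ref{lem-regbas}) and then obtains the other direction by duality, since $(\delta^{c})^{c} = \delta$. Your explicit checks — taking $\QQQ = X$, choosing $p \in \DD$ with $p < r - \delta(\hat{u}, x)$, and noting that the set $V$ produced is $\tau_{\delta^{c}}$-open — are exactly the details the paper leaves implicit.
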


\section{Effective spaces}\label{sec-effspa}

In the remainder of this paper we will concentrate on spaces the elements of which are constructive. In our context this means that they are constructed by a computation process. Hence, they can be represented by a program. Note, however, that not every program needs to determine a construction process. A program is a finite piece of text and can therefore be coded into a natural number. 

Let $\langle\;,\;\rangle:\NN^{2}\rightarrow\NN$ be a
computable pairing function with corresponding  projections $\pi_1$ and $\pi_2$ such that $\pi_{i}(\langle a_{1},a_{2}\rangle )=a_{i}$. We inductively extend the pairing function to an $n$-tuple encoding for $n>2$ by $\pair{a_{1}, \ldots, a_{n}} = \pair{\pair{a_{1}, \ldots, a_{n-1}}, a_{n}}$.
Let $P^{(n)} (R^{(n)})$ denote the set of all $n$-ary partial (total) computable functions, and let $W_{i}$ be the domain of the $i$th partial computable function $\varphi_{i}$ with respect
to some G\"{o}del numbering $\varphi$. Moreover, we let $\varphi_{i}(a)\mathclose\downarrow$ mean that the computation of $\varphi_{i}(a)$  stops, and $\varphi_{i}(a)\mathclose\downarrow \in C$ that it stops with value in $C$.

Let $S$ be a non-empty set. A {\em (partial) numbering\/} $\nu$ of $S$ is a partial map $\pfun{\nu}{\NN}{S}$ (onto) with domain $\dom(\nu)$. 

\begin{definition}\label{dn-precom}
Let $\nu$ and $\kappa$ be numberings of the set $S$. 
\begin{enumerate}
\item 
$ \nu \le  \kappa$, read $\nu$ is \emph{reducible} to $\kappa$, if there is some function $g \in P^{(1)}$ with
$\dom(\nu) \subseteq \dom(g)$, $g(\dom(\nu)) \subseteq \dom(\kappa)$, and $\nu_{m} = \kappa_{g(m)}$, for all $m \in \dom(\nu)$.

\item
$\nu \equiv \kappa$, read $\nu$ is \emph{equivalent} to $\kappa$, if $\nu \le \kappa$ and $\kappa \le \nu$.

\end{enumerate}
\end{definition}

\begin{lemma}\label{lem-numbinf}
For the numberings $\nu, \kappa$ of $S$ and $i \in \dom(\nu), j \in \dom(\kappa)$ set 
\[
(\nu \ast \kappa)_{\pair{i,j}} = \nu_{i},  
\]
if $\nu_{i} = \kappa_{j}$. Otherwise, let $\nu \ast \kappa$ be undefined. Then $\nu \ast \kappa \le \nu, \kappa$.
\end{lemma}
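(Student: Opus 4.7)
The plan is to exhibit explicit total computable reductions, namely the two projection functions $\pi_{1}$ and $\pi_{2}$ associated with the pairing function $\pair{\cdot,\cdot}$. Since these are elements of $R^{(1)} \subseteq P^{(1)}$, they are defined on all of $\NN$, so the domain-inclusion condition in Definition~\ref{dn-precom} is trivially satisfied.

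For the reduction $\nu \ast \kappa \le \nu$ I would take $g = \pi_{1}$. By construction, every element of $\dom(\nu \ast \kappa)$ has the form $\pair{i,j}$ with $i \in \dom(\nu)$ and $j \in \dom(\kappa)$ such that $\nu_{i} = \kappa_{j}$. Hence $\pi_{1}(\pair{i,j}) = i \in \dom(\nu)$, giving $g(\dom(\nu \ast \kappa)) \subseteq \dom(\nu)$, and moreover
\[
(\nu \ast \kappa)_{\pair{i,j}} = \nu_{i} = \nu_{\pi_{1}(\pair{i,j})},
\]
as required.

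For $\nu \ast \kappa \le \kappa$ I would do the symmetric argument with $g = \pi_{2}$. The only place where the defining condition $\nu_{i} = \kappa_{j}$ is actually used is in this second reduction, to rewrite
\[
(\nu \ast \kappa)_{\pair{i,j}} = \nu_{i} = \kappa_{j} = \kappa_{\pi_{2}(\pair{i,j})}.
\]

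There is essentially no obstacle; the lemma is really just the observation that the pairing-function projections witness the fact that $\nu \ast \kappa$ sits below both $\nu$ and $\kappa$ in the reducibility preorder, and the equality clause in the definition of $\nu \ast \kappa$ is designed precisely so that the projection $\pi_{2}$ works. If anything merits a passing comment, it is only that $\pi_{1}, \pi_{2}$ are total, so the condition $\dom(\nu \ast \kappa) \subseteq \dom(g)$ is automatic and does not need separate verification.
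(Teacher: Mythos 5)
Your proof is correct: the projections $\pi_{1}$ and $\pi_{2}$ are exactly the right reduction functions, and the verification of the three conditions of Definition~\ref{dn-precom} is complete. The paper states this lemma without proof, evidently regarding it as immediate, and your argument is the standard one the author intends.
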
 

A subset $X$ of $S$ is {\em completely enumerable}, if there is a computably enumerable
set $W_n$ such that $\nu_i \in X$ if and only if $i \in W_n$, for all $i \in
\dom(\nu)$. Set $M_n = X$, for any such $n$ and $X$, and let $M_n$ be
undefined, otherwise. Then $M$ is a numbering of the class of all
completely enumerable subsets of $S$. $X$ is {\em enumerable}, if there is a computably enumerable set $A
\subseteq \dom(\nu)$ such that $X = \set{\nu_i}{i \in A}$. 

Now, let $\TTT = (T,\tau )$ be a countable topological
$T_{0}$-space with a countable basis $\BBB$. 
Moreover, let $B$ be a numbering of $\BBB$. In the applications we have in
mind the basic open sets can be described in a finite way. The
indexing $B$ is then obtained by an encoding of the finite
descriptions. Moreover, in these cases there is a canonical relation
between the (code numbers of the) descriptions which is stronger than
the usual set inclusion between the described sets. This relation is
computably enumerable, which in general is not true for set inclusion. One has to use
this stronger relation when introducing the notion of an effective space.
\begin{definition}
Let $\prec_B$ be a transitive binary relation on $\omega$. We say that:
\begin{enumerate}
\item $\prec_B$ is a {\em strong inclusion}, if for all $m$, $n\in
\dom(B)$, from $m\prec_B n$ it follows that $B_m \subseteq B_n$.

\item $\BBB$ is a {\em strong basis}, if $\prec_B$ is a strong
inclusion and for all $z \in T$ and $m$, $n \in \dom(B)$ with $z\in
B_m \cap B_n$ there is a number $a\in\dom(B)$ such that $z\in B_a$,
$a \prec_{B} m$ and $a \prec_{B} n$.
\end{enumerate}
\end{definition} 

If one considers basic open sets as vague descriptions, then strong 
inclusion relations can  be considered as `definite refinement'
relations~\cite{sm:86}.
Strong inclusion relations that satisfy much  stronger
requirements (in particular, they are extensional) appear very
naturally in the study of quasi-proximities~\cite{fl:82}. 
Moreover,
such relations have been used in Cz\'asz\'ar's approach to general
topology~\cite{cz:63} and in Smyth's work on topological foundations
of programming language semantics~\cite{sm:88,sm:94}. Compared with
the conditions used in these papers, the above requirements seem to
be  rather weak, but as we go along, we shall meet a further
requirement, and it is this condition which in applications prevents
us from choosing $\prec_B$ to be (the index set of) ordinary set
inclusion. For what follows we assume that $\prec_B$ is a strong
inclusion with respect to which $\BBB$ is a strong basis.
\begin{definition}
Let ${\TTT} = (T, \tau)$ be a countable topological $T_0$-space with
a countable basis $\BBB$, and let $x$ and $B$ be numberings of
$T$ and $\BBB$, respectively. We say that:
\begin{enumerate}
\item x is {\em computable (with respect to $\tau$)\/} if there is some computably enumerable set $L$ such that
for all $i\in\dom(x)$ and $n\in\dom(B)$, $\langle i,n\rangle \in L$ 
if and only if $x_{i}\in B_{n}$.

\item $\TTT$ is {\em effective}, if $B$ is a total indexing
and the property of being a strong basis holds effectively, which
means  that there exists a function $sb\in P^{(3)}$ such that for $i\in
\dom(x)$  and $m$, $n\in \NN$  with $x_{i}\in B_{m}\cap B_{n}$,
$sb(i,m,n)\mathclose\downarrow$, $x_{i}\in B_{sb(i,m,n)}$, 
$sb(i,m,n)\prec_B m$, and $sb(i,m,n)\prec_B n$.

\end{enumerate}
\end{definition}
Thus, $x$ is computable if and only if all basic open sets $B_n$ are
completely enumerable, uniformly in $n$.

Note that very often the totality of
$B$ can easily be achieved, if the space is {\em computably
separable}, which means that it has a dense enumerable subset, called
its \emph{dense base}. The  following lemma
presents a natural sufficient condition for a space to be effective.
\begin{lemma}[\cite{sp}]\label{T_eff}
Let $x$ be computable, $B$ be total, and $\prec_B$ be computably enumerable.
Then $\TTT$ is effective.
\end{lemma}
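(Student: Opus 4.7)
The plan is to build $sb$ by a straightforward dovetailing search, using the three hypotheses exactly once each. Since $x$ is computable with respect to $\tau$, fix a c.e.\ witness set $L$ with $\pair{i,k}\in L \Leftrightarrow x_i\in B_k$ for all $i\in\dom(x)$, $k\in\NN$. Since $\prec_B$ is assumed to be (the index set of) a c.e.\ relation, we may also enumerate $\prec_B$ effectively. The totality of $B$ means that every natural number is a legitimate candidate index, so no partiality has to be tracked.

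On input $(i,m,n)$, I would define $sb(i,m,n)$ to dovetail through all $a\in\NN$ the three semi-decision procedures
\[
\pair{i,a}\in L,\qquad a\prec_B m,\qquad a\prec_B n,
\]
and output the first $a$ for which all three enumerations have fired. Clearly $sb\in P^{(3)}$, and whenever the search terminates at some $a$, we automatically have $x_i\in B_a$, $a\prec_B m$, and $a\prec_B n$, which is precisely what the definition of effectivity requires.

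It remains to argue termination on the relevant inputs. Suppose $i\in\dom(x)$ and $x_i\in B_m\cap B_n$. Because $\BBB$ is a strong basis with respect to $\prec_B$, there exists some $a\in\dom(B)=\NN$ with $x_i\in B_a$, $a\prec_B m$, and $a\prec_B n$. All three conditions are c.e., so they will eventually be confirmed by the dovetailed enumerations, and the search halts at (at latest) such an $a$. This gives $sb(i,m,n)\mathclose\downarrow$ with the required properties, completing the verification.

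There is no real obstacle here; the one subtlety worth flagging in the write-up is the need for $B$ to be total. Without totality one would have to restrict the search to $a\in\dom(B)$, which may fail to be c.e., and one could no longer quote the strong-basis property in the clean form above. With $B$ total, the witness supplied by the strong-basis property is immediately available to the search.
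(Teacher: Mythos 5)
Your proof is correct and is exactly the intended argument: the paper states this lemma by citation to \cite{sp}, and the proof there is the same dovetailed search for an $a$ with $\pair{i,a}\in L$, $a\prec_B m$, $a\prec_B n$, with termination guaranteed by the strong-basis property and totality of $B$. Your remark on why totality of $B$ is needed (so that $L$ characterises membership for every candidate index and the witness is in the search space) is also the right observation.
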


If $x$ is computable, a strong basis of basic open sets can effectively
be enumerated for each neighbourhood filter. As shown in \cite[Lemma~2.9]{sp}, for effective  spaces this can be done in a normed way.
\begin{definition}
An enumeration $( B_{f(a)} )_{a\in\NN}$  with
$f:\NN\rightarrow\NN$ such that $\range(f)\subseteq \dom(B)$ is 
said to be {\em normed\/} if $f$ is decreasing with respect to
$\prec_B$. If $f$ is computable, it is also called {\em computable\/} and
any G\"{o}del number of $f$ is said to be an {\em index\/} of it.
\end{definition}
In case $( B_{f(a)})$ enumerates a strong basis of the neighbourhood
filter of some point, we say it {\em converges\/} to that point. 

In what follows, we want not only to be able to generate normed
computable enumerations of  basic open sets that converge to a given
point, but conversely, we need also to be able to pass  effectively
from such enumerations to the point they converge to.
\begin{definition}
Let $x$ be a numbering of $T$. We say that:
\begin{enumerate}
\item $x$ {\em allows effective limit passing (with respect to $\tau$)\/} if there is a function
$pt\in P^{(1)}$ such that, if $m$ is an index of a  normed computable
enumeration of basic open sets which converges to some point
$y\in T$, then $pt(m)\mathclose\downarrow \in \dom(x)$ and $x_{pt(m)} =
y$.

\item $x$ is {\em acceptable (with respect to $\tau$)\/} if it allows effective limit passing
and is computable (both with respect to $\tau$).
\end{enumerate}
\end{definition}

If $x$ is computable, each neighbourhood filter ${\NNN}(y)$ has
a completely enumerable strong basis of basic open sets, namely the set
of all $B_a$ with $y \in B_a$.  As we shall see now, $T$ has an
 acceptable numbering, if, conversely, ${\NNN}(y)$ has an
enumerable strong basis of basic open sets, for all $y \in T$, and
$\prec_B$ is computably enumerable. By Lemma \ref{T_eff} it follows that $\TTT$ is
effective with respect to this numbering.
\begin{proposition}[\cite{sp}]\label{ex_numbering}
Let $\TTT$ be such that the neighbourhood filter of each point
has an enumerable strong basis of basic open sets. Moreover, let
$\prec_B$ be computably enumerable. Then $T$ has an acceptable
numbering with respect to which $\TTT$ is effective.
\end{proposition}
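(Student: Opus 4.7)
The plan is to let indices for $x$ be exactly the G\"odel numbers of normed computable enumerations of basic open sets that converge to points of $T$. Concretely, I put $m \in \dom(x)$ and set $x_m = y$ whenever $\varphi_m \in R^{(1)}$, $\range(\varphi_m) \subseteq \dom(B)$, $\varphi_m(a+1) \prec_B \varphi_m(a)$ for every $a \in \NN$, and $\{B_{\varphi_m(a)} : a \in \NN\}$ is a strong basis of the neighbourhood filter $\NNN(y)$ for some $y \in T$. Since $\TTT$ is $T_0$, at most one such $y$ exists, so this defines $x$ unambiguously.

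The first step is surjectivity: given $y \in T$, I build a normed computable enumeration converging to $y$ from the enumerable strong basis $A = W_i$ of $\NNN(y)$ guaranteed by hypothesis. Enumerating $A$ as $(a_n)_{n \in \NN}$ and setting $f(0) = a_0$, suppose $f(n) \in A$ with $y \in B_{f(n)}$ has been defined; I need to find a refinement $c \in A$ below both $f(n)$ and $a_{n+1}$. The strong-basis property of $\BBB$ supplies some $b \in \dom(B)$ with $y \in B_b$, $b \prec_B f(n)$, and $b \prec_B a_{n+1}$, and the strong-basis property inside $\NNN(y)$ then produces $c \in A$ with $c \prec_B b$; transitivity of $\prec_B$ gives $c \prec_B f(n)$ and $c \prec_B a_{n+1}$. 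Because both $A$ and $\prec_B$ are c.e., such a $c$ can be located by a dovetailed search, so setting $f(n+1) = c$ yields a total computable, strictly $\prec_B$-decreasing function $f$. Since $f(k) \prec_B a_k$ for $k \ge 1$, the family $\{B_{f(a)}\}$ is cofinal in $A$ with respect to $\prec_B$ and is therefore itself a strong basis of $\NNN(y)$, so any G\"odel number of $f$ witnesses $y \in \range(x)$.

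The remaining properties now follow quickly. For computability of $x$, the equivalence
\[
x_m \in B_n \iff (\exists a \in \NN)\; \varphi_m(a) \prec_B n
\]
holds for $m \in \dom(x)$ and $n \in \dom(B)$ (right to left by strong inclusion together with $x_m \in B_{\varphi_m(a)}$; left to right by the strong-basis property of $\{B_{\varphi_m(a)}\}$ applied to $B_n$), and, combined with computability of $\varphi_m$ and c.e.-ness of $\prec_B$, renders the defining condition c.e.\ uniformly in $\langle m, n\rangle$. Effective limit passing is then automatic from the definition of $\dom(x)$: an index of a normed computable enumeration converging to $y$ is already, by construction, an index for $y$ under $x$, so $pt = \id$ works. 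With $x$ acceptable, $B$ total, and $\prec_B$ c.e., Lemma~\ref{T_eff} delivers effectiveness of $\TTT$ with respect to $x$. The main obstacle is the recursive construction of $f$, where strict $\prec_B$-decrease, membership of each value in the given strong basis $A$, and preservation of the strong-basis property for $\NNN(y)$ must all be secured simultaneously; this is the step where enumerability of $\prec_B$ is indispensable.
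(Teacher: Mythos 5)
The paper itself does not reproduce a proof here but imports the result from~\cite{sp}, and your construction --- taking $\dom(x)$ to consist of the G\"odel numbers of normed computable enumerations converging to points, so that $pt=\id$ gives limit passing for free --- is exactly the standard argument from that source. All the steps check out: the dovetailed recursion building $f$ (using the strong-basis property of $\BBB$ at $y$ to refine below both $f(n)$ and $a_{n+1}$, then dropping back into the c.e.\ set $A$), the characterisation $x_m\in B_n\iff(\exists a)\,\varphi_m(a)\prec_B n$ for computability, and the final appeal to Lemma~\ref{T_eff}.
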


\begin{example}\label{ex-realcomp}
A real number $y$ is called \emph{right-computable} if the set 
\[
\set{\pair{a, b, c, e}}{x < (a - b)\cdot 2^{c-e}}
\]
 is computably enumerable. Similarly,  $y$ is \emph{left-computable} if the set 
 \[
 \set{\pair{a, b, c, e}}{(a - b)\cdot 2^{c-e} < x}
 \]
 is computably enumerable. If $y$ is both left- and right-computable, it is said to be \emph{computable}. Let $\RR_{c}$ denote the set of all computable real numbers. Obviously, $\RR_{c}$ is countable. We denote the restriction of topologies $\LLL$ and $\UUU$ to $\RR_{c}$, respectively, by $\LLL$ and $\UUU$ as well. Define
\[
B^{L}_{\pair{a, b, c, e}} = (- \infty, (a - b) \cdot 2^{c - e}) \quad \text{and} \quad B^{U}_{{\pair{a, b, c, e}}} = ((a - b) \cdot 2^{c - e}, \infty).
\]
Moreover, let  $\pair{a,b,c,e} \prec_{B^{L}} \pair{a',b',c',e'}$ if 
\[
(a - b) \cdot 2^{c-e} < (a' - b') \cdot 2^{c'-e'},
\]
 and $\pair{a,b,c,e} \prec_{B^{U}} \pair{a',b',c',e'}$ if 
 \[
 (a - b) \cdot 2^{c-e} > (a' - b') \cdot 2^{c'-e'}.
 \]
  Then $ \prec_{B^{L}}$ and $ \prec_{B^{U}}$ are computably enumerable, and $\BBB^{\LLL} = \range(B^{L})$ as well as $\BBB^{\UUU} = \range(B^{U})$ are strong bases of $\LLL$ and $\UUU$, respectively. It follows with respect to both topologies that the neighbourhood filter of each point $y \in \RR_{c}$ has an enumerable  strong basis of basic open sets. Hence, $T$ has numberings $x^{L}$ and $x^{U}$ so that $x^{L}$ is acceptable  with respect to $\LLL$ and  $x^{U}$ is acceptable  with respect to $\UUU$. In addition, both spaces, $(\RR_{c}, \LLL)$ and $(\RR_{c}, \UUU)$ are effective.
\end{example}

\begin{proposition}[\cite{sp}]\label{enum_spec}
Let $\TTT$ be effective and $x$ be acceptable. Then each completely
enumerable subset of $T$ is upwards closed under the specialization
order.
\end{proposition}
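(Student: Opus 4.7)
The plan is to reduce matters to the following: given $y = x_{i_0} \in M_n$ with $i_0 \in W_n \cap \dom(x)$, and $z \in T$ with $y \le_\tau z$, I need to exhibit some $j \in W_n$ with $x_j = z$, for then $z \in M_n$ by the very definition of $M_n$. Since $x$ is onto, I fix $j_0 \in \dom(x)$ with $x_{j_0} = z$. The construction is a single, self-referential, normed computable enumeration of basic open sets, built via Kleene's recursion theorem, that drives $pt$ to produce exactly such an index.

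Using the recursion theorem, I would obtain an index $m$ for a total computable function $f$ generating a sequence $(B_{f(a)})_{a \in \NN}$ in two phases. Phase~1 enumerates, in $\prec_B$-decreasing normed form, the basic open neighbourhoods of $y$, obtained from the c.e.\ set $\set{n}{x_{i_0} \in B_n}$ available from the computability of $x$ and refined iteratively via $sb(i_0,\cdot\,,\cdot)$. At each stage $a$, $f$ also simulates $pt(m)$ for $a$ steps and enumerates $W_n$ for $a$ steps; at the first stage $a_0$ at which both $pt(m)$ has converged to some $k$ and $k$ has appeared in $W_n$, $f$ switches to Phase~2, refining via $sb(j_0,\cdot\,,\cdot)$ starting from $B_{f(a_0-1)}$ the basic open neighbourhoods of $z$. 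The critical observation that keeps the $sb$-refinement legal at the transition, and in fact keeps $z$ in every term of $(B_{f(a)})$, is that $y \le_\tau z$ forces every Phase~1 set to contain $z$ along with $y$.

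If the switch never occurs, then $(B_{f(a)})$ is a normed computable strong basis of $\NNN(y)$ and thus converges to $y$; effective limit passing yields $pt(m)\conv \in \dom(x)$ with $x_{pt(m)} = y \in M_n$, which by complete enumerability forces $pt(m) \in W_n$ and would have triggered the switch after all---a contradiction. Hence the switch does occur; the Phase~2 tail is cofinal among basic neighbourhoods of $z$, so the whole sequence is a strong basis of $\NNN(z)$ and converges to $z$; effective limit passing delivers $pt(m)\conv \in \dom(x)$ with $x_{pt(m)} = z$, while the switch condition itself ensures $pt(m) \in W_n$, whence $z \in M_n$. The main obstacle I anticipate is ordering the two phases correctly: beginning with $y$-neighbourhoods is essential, because $y \le_\tau z$ then automatically keeps $z$ in every term, so whichever branch of the case analysis prevails the resulting enumeration genuinely converges to the intended point; the reverse order would leave the post-switch initial terms possibly failing to contain $y$, and the convergence argument in the switching case would collapse.
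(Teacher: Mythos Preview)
Your argument is correct and follows the standard Kleene recursion-theorem technique for this kind of result. The present paper does not actually give a proof of this proposition; it is quoted from~\cite{sp}, where the argument is carried out precisely along the lines you describe: build, via the recursion theorem, a self-referential normed enumeration that starts by approximating $y$, monitors whether $pt(m)$ has landed in $W_n$, and upon detection switches to approximating $z$, with the specialisation relation $y\le_\tau z$ guaranteeing that the transition is legal and that the whole sequence converges to $z$. Your identification of the key point---that one must begin with $y$-neighbourhoods so that $z$ is automatically contained in every Phase~1 term---is exactly the crux of the original proof.

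One small comment on exposition: when you say the Phase~2 tail is cofinal among basic neighbourhoods of $z$, you are implicitly using that the c.e.\ set $\set{n}{x_{j_0}\in B_n}$ can be enumerated in full and that $sb(j_0,\cdot,\cdot)$ is applied against \emph{each} of its members in turn; this is routine, but it is worth making explicit that every basic neighbourhood of $z$ eventually appears as the second argument of some $sb$-call, so that the resulting sequence really is a strong basis of $\NNN(z)$ and not merely a decreasing chain inside it.
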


By definition each open set is the union of certain basic open
sets. In the context of  effective topology one is only interested in
enumerable unions. We call an open set $O\in\tau$  {\em Lacombe\/} set,
if there is an computably enumerable set $A\subseteq \dom(B)$ such that $O =
\bigcup\{\,B_{a}\mid  a\in A \,\}$. Set
$L^{\tau}_{n} = \bigcup\{\,B_{a}\mid  a\in W_{n} \,\}$, if
$W_{n}\subseteq \dom(B)$, and  let $L^{\tau}_{n}$ be undefined,
otherwise. Then $L^{\tau}$ is a numbering of the Lacombe sets of
$\tau$. Obviously, $B\le L^{\tau}$.

The following technical result will be needed later on.
\begin{lemma}[\cite{sp}]\label{intersec-DB}
Let $\TTT$ be effective and computably separable with dense base
$\QQQ$. Moreover, let $x$ be acceptable. Then, for any completely
enumerable subset $X$ of $T$ and any basic open set $B_n$, if $B_n$
intersects $X$, then it also intersects $X \cap \QQQ$.
\end{lemma}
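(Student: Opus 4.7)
The plan is to locate a point $q \in \QQQ \cap B_n \cap X$ by an effectively-dovetailed search and then argue the search terminates.

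\textbf{Producing a refining neighbourhood basis.} First, extract a witness $y \in B_n \cap X$ and fix $i \in \dom(x)$ with $x_i = y$. Let $k$ be the c.e.~index witnessing that $X$ is completely enumerable, so $i \in W_k$; since $x$ is computable and $y \in B_n$, we also have $\langle i, n\rangle \in L$. Using the effectivity of $\TTT$ together with the iterative refinement employed in Lemma~2.9 of \cite{sp} (repeated applications of the $sb$ operator to the enumerable set of basic opens containing $y$, initialised at $B_n$), build an index of a normed computable enumeration $(B_{f(a)})_{a \in \NN}$ of a basis of the neighbourhood filter of $y$, with $f(a) \prec_B n$ for every $a$. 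Then each $B_{f(a)} \subseteq B_n$, and by Proposition \ref{enum_spec} combined with $\bigcap_a B_{f(a)} = \up(y)$, the upper set of $y$ is contained in $X$.

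\textbf{Dovetailed search.} Enumerate the dense base as $\QQQ = \{x_{j_t} : t \in \NN\}$ via a c.e.~sequence $(j_t)$ of $x$-indices. Run a parallel search over all pairs $(a, t)$ for the two simultaneously c.e.~conditions
\[
\langle j_t, f(a)\rangle \in L \quad\text{and}\quad j_t \in W_k,
\]
which express $x_{j_t} \in B_{f(a)} \subseteq B_n$ and $x_{j_t} \in X$ respectively. The first pair $(a, t)$ that satisfies both yields $q = x_{j_t} \in \QQQ \cap B_n \cap X$ as required.

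\textbf{Termination --- the main obstacle.} What must be shown is that some $(a, t)$ satisfying both conditions actually exists. By density of $\QQQ$, each $B_{f(a)}$ contains some $x_{j_{t_a}}$, producing a sequence in $\QQQ$ that converges to $y$ in $\tau$. The plan is to argue that some $x_{j_{t_a}}$ must lie in $X$: assuming otherwise, the sequence $(x_{j_{t_a}})$ would witness a completely enumerable set $X$ that contains $y$ and the whole upper set $\up(y)$, yet excludes every $\QQQ$-approximant inside the neighbourhood basis $(B_{f(a)})$. Using the effective limit passing $pt$ afforded by the acceptability of $x$, the index of $(B_{f(a)})$ maps back to $i$; combining this with the $W_k$-status of the $j_{t_a}$'s and the upwards-closure of $X$ from Proposition \ref{enum_spec}, one produces a Rice--Shapiro-type contradiction with the complete enumerability of $X$. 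Carrying out this contradiction cleanly---reconciling the $\tau$-convergence of the $\QQQ$-approximants with the c.e.~description of $X$---is the technical heart of the proof, and is where the assumption that $x$ is acceptable (not merely computable) must genuinely be used.
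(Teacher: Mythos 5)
Your first two steps (building a normed computable enumeration $(B_{f(a)})$ of a strong basis of the neighbourhood filter of $y$ below $n$, and dovetailing the c.e.\ conditions $\langle j_t,f(a)\rangle\in L$ and $j_t\in W_k$) are fine, but they only set up the search. The entire content of the lemma is the termination claim, and there you have a genuine gap: you state that a ``Rice--Shapiro-type contradiction'' can be produced and explicitly defer ``carrying out this contradiction cleanly'' as ``the technical heart of the proof.'' That heart is missing, and the sketch you give does not supply it. Proposition~\ref{enum_spec} (upward closure of $X$ under the specialisation order) is of no help here --- in a $T_1$-like situation it is vacuous, and the lemma is still non-trivial (e.g.\ it is exactly what rules out $\{y\}$ being completely enumerable for a non-basic point $y$). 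Also, your remark that ``the index of $(B_{f(a)})$ maps back to $i$'' under $pt$ is unwarranted: $pt(m)$ is merely \emph{some} index with $x_{pt(m)}=y$; nothing forces $pt(m)=i$, and nothing a priori tells you when, or whether, $pt(m)$ enters $W_k$ in a way you can exploit.

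The missing idea is the Kreisel--Lacombe--Shoenfield/Ce\u{\i}tin-style self-referential construction via the recursion theorem. One builds (knowing its own index $m$ in advance) a normed computable enumeration that follows $(B_{f(a)})$ towards $y$ for as long as $pt(m)$ has not yet appeared in $W_k$; if at some stage $s$ the membership $pt(m)\in W_k$ is witnessed, the enumeration is redirected to converge to a point $q\in\QQQ\cap B_{f(s)}$ (effectively findable since $\QQQ$ is an enumerable dense base, $x$ is computable and $\TTT$ is effective, so the $sb$ function lets one refine inside $B_{f(s)}$ around $q$). If $pt(m)$ never enters $W_k$, the enumeration converges to $y$, so $x_{pt(m)}=y\in X$ and hence $pt(m)\in W_k$ after all --- contradiction. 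Therefore the redirection occurs, the enumeration converges to some $q\in\QQQ\cap B_{f(s)}\subseteq B_n$, effective limit passing gives $x_{pt(m)}=q$, and $pt(m)\in W_k$ gives $q\in X$; this $q$ witnesses the claim. This is where acceptability of $x$ (effective limit passing) is genuinely used, and it is precisely the step your proposal leaves open.
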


\begin{definition}\label{dn-bieff}
Let $\TTT = (T, \tau, \sigma)$ be a countable bi-topological space such that $\tau$ and $\sigma$ have countable bases $\BBB^{\tau}$ and $\BBB^{\sigma}$, respectively, and satisfy the $T_{0}$ separation condition. Moreover, let $x$, $B^{\tau}$, and $B^{\sigma}$ be indexings of $T$, $\BBB^{\tau}$, and $\BBB^{\sigma}$, respectively, as well as $\prec_{\tau}$ and $\prec_{\sigma}$ be strong inclusions with respect to which $\BBB^{\tau}$ and $\BBB^{\sigma}$, respectively, are strong bases. We say that
\begin{enumerate}

\item\label{dn-bieff-1}
$x$ \emph{bi-computable} if $x$ is computable with respect to both topologies, $\tau$ and $\sigma$.

\item\label{dn-bieff-1a}
$x$ \emph{allows effective bi-limit passing} if there is some function $\pt \in P^{(1)}$ such that, if $m_{\tau}$ and $m_{\sigma}$ are indices of normed computable enumerations of basic open sets of $\tau$ and $\sigma$, respectively, both converging to the same point $y \in T$, then $\pt(\pair{m_{\tau}, m_{\sigma}})\conv \in \dom(x)$ and $x_{\pt(\pair{m_{\tau},m_{\sigma}})} = y$.

\item\label{dn-bieff-1b}
$x$ is \emph{bi-acceptable} if it allows effective bi-limit passing and is bi-computable.

\item\label{dn-bieff-2}
$\TTT$ \emph{effective} if space $(T, \tau)$ is effective and numbering $B^{\sigma}$ is total.

\item\label{dn-bieff-3}
$\TTT$ \emph{bi-effective} if both spaces, $(T, \tau)$ and $(T, \sigma)$, are effective.

\end{enumerate}
\end{definition}

As seen above, $(\RR_{c}, \UUU, \LLL)$ and $(\RR_{c}, \LLL, \UUU)$ are both bi-effective.

\begin{proposition}\label{pn-bicom}
Let $\TTT = (T, \tau, \sigma)$ be a countable bi-topological space such that $\tau$ and $\sigma$ have countable bases $\BBB^{\tau}$ and $\BBB^{\sigma}$, respectively, and satisfy the $T_{0}$ separation condition. Moreover, let $B^{\tau}$ and $B^{\sigma}$ be indexings of $\BBB^{\tau}$ and $\BBB^{\sigma}$, respectively, and $\prec_{\tau}$ and $\prec_{\sigma}$ be strong inclusions with respect to which $\BBB^{\tau}$ and $\BBB^{\sigma}$, respectively, are strong bases. Then the following statements hold:
\begin{enumerate}

\item\label{pn-bicom-1} 
For $m, m' \in \dom(B^{\tau})$ and  $n, n' \in \dom(B^{\sigma})$,  set 
\begin{gather*}
B^{\tau \vee \sigma}_{\pair{m,n}} = B^{\tau}_{m} \cap B^{\sigma}_{n} 
\intertext{and}
\pair{m,n} \prec_{B^{\tau \vee \sigma}} \pair{m', n'} \Longleftrightarrow m \prec_{\tau} m' \wedge n \prec_{\sigma} n'.
\end{gather*}
 Then $ \prec_{B^{\tau \vee \sigma}}$ is a strong inclusion and $\range(B^{\tau \vee \sigma})$ a strong basis of $\tau \vee \sigma$.

\item\label{pn-bicom-2} 
Let $x$ be a numbering of $T$. Then $x$ is bi-computable if and only if $x$ is computable with respect to $\tau \vee \sigma$.

\item\label{pn-bicom-3}
Let $x^{\tau}, x^{\sigma}$ be numberings of $T$ that are computable with respect to $\tau$ and $\sigma$, respectively. Then $x^{\tau} \ast x^{\sigma}$ is bi-computable.

\item\label{pn-bicom-4}
Let $x^{\tau}, x^{\sigma}$ be numberings of $T$ that allow effective limit passing with respect to $\tau$ and $\sigma$, respectively. Then $x^{\tau} \ast x^{\sigma}$ allows effective bi-limit passing.

\end{enumerate}
\end{proposition}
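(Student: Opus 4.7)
The plan is to check the four parts in order, each essentially by unfolding the relevant definitions. Part (\ref{pn-bicom-1}) is the most substantive, while Parts (\ref{pn-bicom-2})--(\ref{pn-bicom-4}) are routine manipulations of numberings and computably enumerable sets.

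For Part (\ref{pn-bicom-1}), I would first argue that $\prec_{B^{\tau\vee\sigma}}$ is a strong inclusion: transitivity is inherited componentwise from $\prec_\tau$ and $\prec_\sigma$, and if $\pair{m,n}\prec_{B^{\tau\vee\sigma}}\pair{m',n'}$ then $B^\tau_m\subseteq B^\tau_{m'}$ and $B^\sigma_n\subseteq B^\sigma_{n'}$, giving $B^{\tau\vee\sigma}_{\pair{m,n}}\subseteq B^{\tau\vee\sigma}_{\pair{m',n'}}$. Next, the fact that $\range(B^{\tau\vee\sigma})$ is an ordinary basis of $\tau\vee\sigma$ is standard: every open set of $\tau\vee\sigma$ is a union of finite intersections of members of $\tau\cup\sigma$, and any such intersection of the form $U\cap V$ with $U\in\tau, V\in\sigma$ can be refined at any of its points using the bases $\BBB^\tau$ and $\BBB^\sigma$. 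Finally, the strong basis property for $\range(B^{\tau\vee\sigma})$ reduces to the strong basis property of the components: if $z\in B^{\tau\vee\sigma}_{\pair{m,n}}\cap B^{\tau\vee\sigma}_{\pair{m',n'}}$, then $z\in B^\tau_m\cap B^\tau_{m'}$ yields $a\in\dom(B^\tau)$ with $z\in B^\tau_a$, $a\prec_\tau m$, $a\prec_\tau m'$, and similarly $b\in\dom(B^\sigma)$ with $z\in B^\sigma_b$, $b\prec_\sigma n$, $b\prec_\sigma n'$; the pair $\pair{a,b}$ then witnesses the strong basis property.

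For Part (\ref{pn-bicom-2}), the forward direction combines the c.e.\ sets $L_\tau$ and $L_\sigma$ witnessing computability of $x$ w.r.t.\ $\tau$ and $\sigma$ into one c.e.\ set $L$ satisfying $\pair{i,\pair{m,n}}\in L \Longleftrightarrow \pair{i,m}\in L_\tau \wedge \pair{i,n}\in L_\sigma$. For the reverse direction, given $L$ witnessing computability w.r.t.\ $\tau\vee\sigma$, I recover $L_\tau$ using the fact that $\BBB^\sigma$ covers $T$: $x_i\in B^\tau_m$ iff there exists some $n\in\dom(B^\sigma)$ with $x_i\in B^\tau_m\cap B^\sigma_n$, which is c.e.\ by projection on $L$; symmetrically for $L_\sigma$.

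For Part (\ref{pn-bicom-3}), the numbering $x^\tau\ast x^\sigma$ by Lemma~\ref{lem-numbinf} satisfies $(x^\tau\ast x^\sigma)_{\pair{i,j}}=x^\tau_i=x^\sigma_j$ whenever defined, so the c.e.\ set $\{\pair{\pair{i,j},m} : \pair{i,m}\in L_\tau\}$ witnesses computability of $x^\tau\ast x^\sigma$ w.r.t.\ $\tau$, and symmetrically for $\sigma$. For Part (\ref{pn-bicom-4}), given functions $\pt_\tau, \pt_\sigma\in P^{(1)}$ witnessing effective limit passing for $x^\tau$ and $x^\sigma$, set $\pt(\pair{m_\tau,m_\sigma})=\pair{\pt_\tau(m_\tau), \pt_\sigma(m_\sigma)}$; if $m_\tau, m_\sigma$ both converge (in their respective topologies) to the same point $y$, then $x^\tau_{\pt_\tau(m_\tau)}=y=x^\sigma_{\pt_\sigma(m_\sigma)}$, so $\pair{\pt_\tau(m_\tau), \pt_\sigma(m_\sigma)}\in\dom(x^\tau\ast x^\sigma)$ with value $y$, as required.

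The only place where a small subtlety arises is in Part~(\ref{pn-bicom-2}), where I need to use that the basis $\BBB^\sigma$ covers $T$ in order to extract a c.e.\ witness for $\tau$-computability from a c.e.\ witness for $(\tau\vee\sigma)$-computability; the remaining arguments are purely formal.
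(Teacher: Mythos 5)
Your proposal is correct and follows essentially the same route as the paper's proof: the same combination/projection of c.e.\ sets in Part~(\ref{pn-bicom-2}) (including the use of the fact that $\BBB^{\sigma}$ covers $T$ for the reverse direction), the same padded c.e.\ set for Part~(\ref{pn-bicom-3}), and the same pairing of $\pt_{\tau}$ and $\pt_{\sigma}$ for Part~(\ref{pn-bicom-4}). The only difference is that you spell out Part~(\ref{pn-bicom-1}) componentwise, which the paper dismisses as obvious; your argument there is the intended one.
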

\begin{proof}
(\ref{pn-bicom-1}) The first assertion is obvious. 

(\ref{pn-bicom-2})Assume that $x$ is bi-computable and let this be witnessed by computably enumerable sets $L^{\tau}$ and $L^{\sigma}$. Define
\[
L = \set{\pair{i, \pair{m,n}}}{\pair{i, m} \in L^{\tau} \wedge \pair{i, n} \in L^{\sigma}}.
\]
Then $L$ is computably enumerable and we have for $i \in \dom(x)$ and $\pair{m, n} \in \dom(B^{\tau \vee \sigma})$,
\begin{align*}
x_{i} \in B^{\tau \vee \sigma}  
&\Longleftrightarrow x_{i} \in B^{\tau} \wedge x_{i} \in B^{\sigma} \\
&\Longleftrightarrow \pair{i, m} \in L^{\tau} \wedge \pair{i, n} \in L^{\sigma} \\
&\Longleftrightarrow \pair{i, \pair{m, n}} \in L.
\end{align*}

Next, conversely, let  $L \subseteq \NN$ be some computably enumerable set  witnessing that $x$ is computable with respect to $\tau \vee \sigma$. Set
\begin{align*}
&L^{\tau} = \set{\pair{i, m}}{(\exists n)\, \pair{i, \pair{m, n}} \in L}, \\
&L^{\sigma} =  \set{\pair{i, n}}{(\exists m)\, \pair{i, \pair{m, n}} \in L}.
\end{align*}
Then $L^{\tau}$ and $L^{\sigma}$ are both computably enumerable and for $i \in \dom(x)$ and $m \in \dom(B^{\tau})$,
\begin{align*}
x_{i} \in B^{\tau}_{m}
&\Longleftrightarrow x_{i} \in B^{\tau}_{m} \wedge x_{i } \in T \\
&\Longleftrightarrow (\exists n)\, (x_{i} \in B^{\tau}_{m} \wedge x_{i} \in B^{\sigma}_{n}) \\
&\Longleftrightarrow (\exists n)\, x_{i} \in B^{\tau \vee \sigma}_{\pair{m ,n}} \\
&\Longleftrightarrow (\exists n)\, \pair{i, \pair{m, n}} \in L \\
&\Longleftrightarrow \pair{i, m} \in L^{\tau}.
\end{align*}
Similarly, it follows for $i \in \dom(x)$ and $n \in \dom(B^{\sigma})$ that $x_{i} \in B^{\sigma}_{n}$, exactly if $\pair{i, n} \in L^{\sigma}$.

(\ref{pn-bicom-3}) Let $L^{\tau} \subseteq \NN$ witness that $x^{\tau}$ is computable with respect to $\tau$ and set $\hat{L}^{\tau} = \{\, \pair{\pair{i,j},n} \mid \pair{i,n} \in L^{\tau} \wedge j \in \NN \,\}$. Then $\hat{L}^{\tau}$ is computably enumerable and it follows for $i \in \dom(x^{\tau})$, $j \in \dom(x^{\sigma})$ and $n \in \dom(B^{\tau})$ that
\[
(x^{\tau} \ast x^{\sigma})_{\pair{i,j}} \in B^{\tau}_{n} \Longleftrightarrow x^{\tau}_{i} \in B^{\tau}_{n} \Longleftrightarrow \pair{i,n} \in L^{\tau} \Longleftrightarrow \pair{\pair{i,j},n} \in \hat{L}^{\tau}.
\]
Computability with respect to $\sigma$ follows analogously.

(\ref{pn-bicom-4}) Let $\pt_{\tau}, \pt_{\sigma} \in P^{(1)}$ witness that $x^{\tau}$ and $x^{\sigma}$, respectively, allow effective limit passing. Then $\pt(\pair{m_{1},m_{2}}) = \pair{\pt_{\tau}(m_{1}), \pt_{\sigma}(m_{2})}$ witnesses that $x^{\tau} \ast x^{\sigma}$ allows effective bi-limit passing.
\end{proof}

In case $\TTT$ is effective, topology $\tau$ is the major topology. Approximations, e.g., are done with respect to $\tau$. Topology $\sigma$ is only an auxiliary topology. 

\begin{definition}\label{dn-effreg}
Let $x$, $B^{\tau}$, and $B^{\sigma}$ be indexings of $T$, $\BBB^{\tau}$, and $\BBB^{\sigma}$, respectively. 
\begin{enumerate}

\item
$\tau$ is \emph{effectively regular with respect to $\sigma$}, if there are functions $s, t \in P^{(2)}$ such that for all $i \in \dom(x)$ and $m \in \dom(B^{\tau})$ with $x_{i} \in B^{\tau}_{m}$, 
\begin{enumerate}

\item\label{dn-effreg-1}
$s(i, m)\conv \in \dom(B^{\tau})$ and $t(i,m) \in \dom(L^{\sigma})$,

\item\label{dn-effreg-2}
$x_{i} \in B^{\tau}_{s(i,m)} \subseteq B^{\tau}_{m}$,

\item\label{dn-effreg-3}
$\overline{B^{\tau}_{m}} \subseteq L^{\sigma}_{t(i,m)}$, and

\item\label{dn-effreg-4}
$B^{\tau}_{s(i,m)} \cap L^{\sigma}_{t(i,m)} = \emptyset$.

\end{enumerate}

\item 
$\TTT$ is, or $\tau$ and $\sigma$ are, \emph{effectively pairwise regular} if $\tau$ is effectively regular with respect to $\sigma$ and vice versa.

\end{enumerate}
\end{definition}

Assume that we are given a point $x_{i} \in T$ and a basic open set $B^{\tau}_{m}$ with $x_{i} \in B^{\tau}_{m}$ and we need to find some point $z$ in the complement $\overline{B^{\tau}_{m}}$ of $B^{\tau}_{m}$. If $x$ is bi-computable, we are only able to effectively list the elements of basic open or Lacombe sets. In general we cannot effectively search for $z$ in the complement of a basic open set. Here, the topology $\sigma$ comes into play. If $\tau$ is effectively regular with respect to $\sigma$, we can compute a basic open set $B^{\tau}_{s(i,m)}$ and a Lacombe set $L^{\sigma}_{t(i,m)}$ so that  $B^{\tau}_{s(i,m)}$ is a subset of $B^{\tau}_{m}$ containing $x_{i}$, $\overline{B^{\tau}_{m}}$ is contained in $L^{\sigma}_{t(i,m)}$, and $B^{\tau}_{s(i,m)}$ and $L^{\sigma}_{t(i,m)}$ are disjoint. Because we can effectively list the elements of $L^{\sigma}_{t(i,m)}$, we can find a $z$ which is at least in the larger set $L^{\sigma}_{t(i,m)}$ but which is not in the open subset $B^{\tau}_{s(i,m)}$ of $B^{\tau}_{m}$. As will turn out, this is sufficient for our purposes.

\section{Computable quasi-metric spaces}\label{sec-compqm}

\sloppy As seen, quasi-pseudo-metric spaces are particular bi-topological spaces. Let us now study this relationship in the setting of effective spaces. By definition effective spaces satisfy the $T_{0}$ condition. Therefore, we restrict ourselves to the case of quasi-metric spaces.

\begin{definition}\label{dn-compqmetlow}
Let  $(X, \delta)$ be a quasi-metric space and $\QQQ$ a countable dense subset with respect to $\tau_{\delta^{*}}$. Moreover, let $\fun{\beta}{\NN}{\QQQ}$ be a total numbering of $\QQQ$. $(X, \delta)$ is \emph{computable with respect to the lower topology on $[0, \infty)$} (for short: \emph{lower computable}) if the set 
\[
\set{\pair{a, b, c,  e}}{\delta(\beta_{a}, \beta_{b}) < c \cdot 2^{-e}}
\]
is computably enumerable.
\end{definition}

Now, assume that $(X, \delta)$ is lower computable. For $i, m \in \NN$, set $B^{\tau_{\delta}}_{\pair{i,m}} = \ball{\delta}{\beta_{i}}{2^{-m}}$, and for $i, j, m, n \in  \NN$ define
\[
\pair{i, m} \prec_{B^{\tau_{\delta}}} \pair{j, n} \Longleftrightarrow \delta(\beta_{j}, \beta_{i}) + 2^{-m} < 2^{-n}.
\]
Then $\prec_{B^{\tau_{\delta}}}$ is a computably enumerable strong inclusion relation with respect to which the collection of all sets $B^{\tau_{\delta}}_{\pair{i,m}}$ with $i, m \in \NN$ is a strong basis of the quasi-metric topology $\tau_{\delta}$. 

We say that $y \in X$ is \emph{weakly computable} if the set 
\[
\set{\pair{a, e}}{\delta(\beta_{a}, y) < 2^{-e}}
\]
 is computably enumerable, and denote the set of all such points by $X_{wc}$. Let $x^{wc}_{i} = y$, if 
 \[
 W_{i} = \set{\pair{a, e}}{\delta(\beta_{a}, y) < 2^{-e}}.
 \]
Otherwise, let $x^{wc}$ be undefined. Then $x^{wc}$ is a numbering of $X_{wc}$.

\begin{lemma} 
Numbering $x^{wc}$ is acceptable and space $(X_{wc}, \tau_{\delta})$ effective.
\end{lemma}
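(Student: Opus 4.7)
The plan is to handle both assertions together, reducing effectiveness of $(X_{wc},\tau_\delta)$ to computability of $x^{wc}$ via Lemma~\ref{T_eff}: the numbering $B^{\tau_\delta}$ is total on $\NN$ by construction, and $\prec_{B^{\tau_\delta}}$ has already been noted to be computably enumerable, so only the two clauses of acceptability remain to be verified.

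For computability, I will define
\[
L = \{\,\pair{i,\pair{j,m}} \mid (\exists \pair{a,e})\; \pair{a,e} \in W_i \wedge \delta(\beta_j,\beta_a) + 2^{-e} < 2^{-m}\,\}
\]
and claim that $L$ witnesses computability of $x^{wc}$. The set $L$ is c.e.\ because $W_i$ is uniformly c.e.\ in $i$ and lower computability of $(X,\delta)$ makes the inequality $\delta(\beta_j,\beta_a) + 2^{-e} < 2^{-m}$ c.e. The forward direction is a one-line application of the triangle inequality: $\delta(\beta_a,x^{wc}_i) < 2^{-e}$ and $\delta(\beta_j,\beta_a) + 2^{-e} < 2^{-m}$ imply $\delta(\beta_j,x^{wc}_i) < 2^{-m}$. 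For the converse, given $\delta(\beta_j,x^{wc}_i) < 2^{-m}$, I will pick $e$ with $\delta(\beta_j,x^{wc}_i) + 2\cdot 2^{-e} < 2^{-m}$ and invoke $\tau_{\delta^*}$-density of $\QQQ$ to obtain $\beta_a \in \QQQ$ with both $\delta(\beta_a,x^{wc}_i) < 2^{-e}$ and $\delta(x^{wc}_i,\beta_a) < 2^{-e}$; the estimate $\delta(\beta_j,\beta_a) \le \delta(\beta_j,x^{wc}_i) + \delta(x^{wc}_i,\beta_a)$ then delivers $\delta(\beta_j,\beta_a) + 2^{-e} < 2^{-m}$, while the first bound ensures $\pair{a,e} \in W_i$.

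For effective limit passing, given an index $k$ of a normed computable enumeration $f$ converging to some $y \in X_{wc}$, I will uniformly in $k$ enumerate
\[
A_k = \{\,\pair{a,e} \mid (\exists c)\; f(c) \prec_{B^{\tau_\delta}} \pair{a,e}\,\},
\]
which is c.e.\ since $f$ is computable and $\prec_{B^{\tau_\delta}}$ is c.e.; the s-m-n theorem then supplies $\pt(k)$ with $W_{\pt(k)} = A_k$. Writing $N_y = \{\,\pair{a,e} \mid \delta(\beta_a,y) < 2^{-e}\,\}$, the inclusion $A_k \subseteq N_y$ is immediate: $f(c) \prec_{B^{\tau_\delta}} \pair{a,e}$ forces $B^{\tau_\delta}_{f(c)} \subseteq B^{\tau_\delta}_{\pair{a,e}}$ because $\prec_{B^{\tau_\delta}}$ is a strong inclusion, and $y \in B^{\tau_\delta}_{f(c)}$ by convergence. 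The reverse inclusion uses the strong-basis character of the convergent enumeration, which for every basic ball containing $y$ provides a $\prec_{B^{\tau_\delta}}$-refining $f(c)$. Hence $W_{\pt(k)} = N_y$, so $\pt(k) \in \dom(x^{wc})$ and $x^{wc}_{\pt(k)} = y$.

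The main technical point is the converse direction in the computability step: turning ``$y$ lies in $\ball{\delta}{\beta_j}{2^{-m}}$'' into a $\QQQ$-witness $\pair{a,e}$ with $\delta(\beta_j,\beta_a) + 2^{-e} < 2^{-m}$. This is exactly where $\tau_{\delta^*}$-density of $\QQQ$, rather than mere $\tau_\delta$-density, is indispensable, since the argument requires a bound on $\delta(x^{wc}_i,\beta_a)$ (a $\delta^c$-bound) in order to run the triangle inequality in the ``wrong'' direction. Once this ingredient is secured, everything else reduces to routine c.e.\ bookkeeping and an invocation of Lemma~\ref{T_eff}.
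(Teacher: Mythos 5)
Your proof is correct, and the second half (effective limit passing via the c.e.\ set $A_k$, the s-m-n theorem, and the two inclusions $A_k\subseteq N_y$ and $N_y\subseteq A_k$ coming from strong inclusion and the strong-basis property of the convergent enumeration) is essentially identical to the paper's argument. Where you diverge is the computability clause. The paper exploits that, by the very definition of $x^{wc}$, the set $W_i$ for $i\in\dom(x^{wc})$ \emph{is} the set of codes $\pair{a,e}$ with $x^{wc}_i\in\ball{\delta}{\beta_a}{2^{-e}}$, so the witness $L=\set{\pair{i,\pair{a,e}}}{\pair{a,e}\in W_i}$ works with no further argument. You instead build the more elaborate set $L=\set{\pair{i,\pair{j,m}}}{(\exists \pair{a,e})\,\pair{a,e}\in W_i\wedge \delta(\beta_j,\beta_a)+2^{-e}<2^{-m}}$ and verify both directions by the triangle inequality and $\tau_{\delta^*}$-density of $\QQQ$; this is sound (note the inequality is only satisfiable for $e>m$, where it reads $\delta(\beta_j,\beta_a)<(2^{e-m}-1)\cdot 2^{-e}$ and is c.e.\ by lower computability), but it imports lower computability of $\delta$ and the density of $\QQQ$ into a step where the paper needs neither --- those hypotheses are only genuinely required for the computable enumerability of $\prec_{B^{\tau_\delta}}$, which both you and the paper use for Lemma~\ref{T_eff} and for limit passing. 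So your route is a valid but unnecessarily indirect detour on that one clause; it would become the \emph{right} argument if $W_i$ were only required to contain, rather than equal, the neighbourhood-code set, which may be why you reached for it.
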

\begin{proof}
Let $L = \set{\pair{i, \pair{a, e}}}{\pair{a, e} \in W_{i}}$. Then $L$ is computably enumerable and we have for $i \in \dom(x^{wc})$ and $a, e \in \NN$ that
\[
x^{wc}_{i} \in \ball{\delta}{\beta_{a}}{2^{-e}} \Leftrightarrow \pair{a, e} \in W_{i} \Leftrightarrow \pair{i, \pair{a, e}} \in L.
\] 
Hence, $x^{wc}$ is computable from which we obtain with Lemma~\ref{T_eff} that $(X_{wc}, \tau_{\delta})$ is effective.

Next, note that $\set{\pair{a, b, e}}{(\exists n)\, \varphi_{a}(n)\conv \wedge \varphi_{a}(n) \prec_{\tau_{\delta}} \pair{b, e}}$ is computably enumerable. Hence, by the s-m-n theorem \cite{ro:67}, there is some $h \in R^{(1)}$  with
\[
W_{h(a)} = \set{\pair{b, e}}{(\exists n)\, \varphi_{a}(n)\conv \wedge \varphi_{a}(n) \prec_{\tau_{\delta}} \pair{b, e}}.
\]
Now, let $\varphi_{a} \in R^{(1)}$ be decreasing with respect to $ \prec_{\tau_{\delta}}$ so that the collection of all $B^{\tau_{\delta}}_{\varphi_{a}(n)}$ with $n \in \NN$ is a strong basis of the neighbourhood filter of some point $y \in X^{wc}$. Then
\[
y \in \ball{\delta}{\beta_{b}}{2^{-e}} \Leftrightarrow (\exists n)\, \varphi_{a}(n) \prec_{\tau_{\delta}} \pair{b, e} \Leftrightarrow \pair{b, e} \in W_{h(a)},
\]
which means that $x^{wc}_{h(a)} = y$. Thus, $x^{wc}$ allows effective limit passing.
\end{proof}

We call a point $y \in X$ \emph{computable} if the set
\[
\set{\pair{a, b, e}}{\delta(\beta_{a}, y), \delta(y, \beta_{b}) < 2^{-e}}
\]
 is computably enumerable, and denote the set of all computable points by $X_{c}$. Let $x^{c}_{i} = y$, if 
 \[
 W_{i} = \set{\pair{a, b, e}}{\delta(\beta_{a}, y), \delta(y, \beta_{b}) < 2^{-e}}.
 \]
Otherwise, let $x^{c}$ be undefined. Then $x^{c}$ is a numbering of $X_{c}$.

\begin{lemma} 
Numbering $x^{c}$ is bi-acceptable and space $(X_{c}, \tau_{\delta}, \tau_{\delta^{c}})$ bi-effective. Moreover, $\QQQ \subseteq X_{c} \subseteq X_{wc}$. We even have that $\beta \le x^{c} \le x^{wc}$.
\end{lemma}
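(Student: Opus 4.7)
The plan is to verify each clause through s-m-n constructions parallel to those used for $x^{wc}$, with the only extra ingredient being density of $\QQQ$ in $\tau_{\delta^{*}}$ (which implies density in each of $\tau_{\delta}$ and $\tau_{\delta^{c}}$) to move between ``two-sided'' and ``single-sided'' distance conditions.

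First I would handle the reductions. For $\beta \le x^{c}$: by lower computability, the set $\set{\pair{a', b', e'}}{\delta(\beta_{a'}, \beta_{a}) < 2^{-e'} \wedge \delta(\beta_{a}, \beta_{b'}) < 2^{-e'}}$ is uniformly c.e.\ in $a$, so the s-m-n theorem yields $h \in R^{(1)}$ with $W_{h(a)}$ equal to this set, hence $x^{c}_{h(a)} = \beta_{a}$ and $\QQQ \subseteq X_{c}$. For $x^{c} \le x^{wc}$: the projection $\set{\pair{a, e}}{(\exists b)\, \pair{a, b, e} \in W_{i}}$ is uniformly c.e.\ in $i$; for $y = x^{c}_{i}$ it equals $\set{\pair{a, e}}{\delta(\beta_{a}, y) < 2^{-e}}$. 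The ``$\subseteq$'' direction is immediate; for ``$\supseteq$'', if $\delta(\beta_{a}, y) < 2^{-e}$ then density of $\QQQ$ in $\tau_{\delta}$ produces some $b$ with $\delta(y, \beta_{b}) < 2^{-e}$, so $\pair{a, b, e} \in W_{i}$. Applying s-m-n then gives $x^{c} \le x^{wc}$, and hence $X_{c} \subseteq X_{wc}$.

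Next, bi-computability of $x^{c}$ follows directly. The c.e.\ set $L^{\tau} = \set{\pair{i, \pair{a, e}}}{(\exists b)\, \pair{a, b, e} \in W_{i}}$ witnesses computability with respect to $\tau_{\delta}$, because $x^{c}_{i} \in B^{\tau_{\delta}}_{\pair{a, e}}$ iff $\delta(\beta_{a}, x^{c}_{i}) < 2^{-e}$, and the symmetric projection on the $a$-coordinate of $W_{i}$ handles $\tau_{\delta^{c}}$ using $B^{\tau_{\delta^{c}}}_{\pair{b, e}} = \set{z \in X_{c}}{\delta(z, \beta_{b}) < 2^{-e}}$ together with density of $\QQQ$ in $\tau_{\delta^{c}}$.

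For effective bi-limit passing, given indices $m_{\tau}, m_{\sigma}$ of normed computable enumerations of basic open sets in $\tau_{\delta}$ and $\tau_{\delta^{c}}$ respectively, both converging to the same $y$, the set
\[
\set{\pair{a, b, e}}{(\exists n)\, \varphi_{m_{\tau}}(n)\conv \wedge \varphi_{m_{\tau}}(n) \prec_{\tau_{\delta}} \pair{a, e} \wedge (\exists n')\, \varphi_{m_{\sigma}}(n')\conv \wedge \varphi_{m_{\sigma}}(n') \prec_{\tau_{\delta^{c}}} \pair{b, e}}
\]
is uniformly c.e.\ in $\pair{m_{\tau}, m_{\sigma}}$. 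Repeating the strong-basis argument of the $x^{wc}$ proof on each half, the left conjunct is equivalent to $\delta(\beta_{a}, y) < 2^{-e}$ and the right one to $\delta(y, \beta_{b}) < 2^{-e}$, so s-m-n yields the required $\pt \in P^{(1)}$, and together with bi-computability this gives bi-acceptability. Finally, Lemma~\ref{T_eff} applied separately to $\tau_{\delta}$ and $\tau_{\delta^{c}}$ — each carrying a total basis numbering and a c.e.\ strong inclusion — delivers bi-effectiveness. The only genuine subtlety throughout is the repeated appeal to density of $\QQQ$ in $\tau_{\delta^{*}}$ to recover the one-sided ball conditions when projecting $W_{i}$ along a coordinate; without this density, the projection would fail to enumerate \emph{all} pairs $\pair{a, e}$ (resp.\ $\pair{b, e}$) satisfying the single-sided inequality.
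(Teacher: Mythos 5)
Your proposal is correct and follows essentially the same route as the paper: projecting $W_{i}$ along one coordinate (using density of $\QQQ$ in $\tau_{\delta^{*}}$, hence in each of $\tau_{\delta}$ and $\tau_{\delta^{c}}$, to justify the equivalence after projection) for bi-computability and for $x^{c} \le x^{wc}$, an s-m-n construction from lower computability for $\beta \le x^{c}$, the paired strong-basis argument for effective bi-limit passing, and Lemma~\ref{T_eff} applied to each topology for bi-effectiveness. No gaps.
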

\begin{proof}
Let $L^{*} = \set{\pair{i, \pair{a, e}}}{(\exists b)\, \pair{a, b, e} \in W_{i}}$. Then $L^{*}$ is computably enumerable. Since $\QQQ$ is dense with respect to $\tau_{\delta}$ as well as $\tau_{\delta^{c}}$ we have for $i \in \dom(x^{c})$ and $a, e \in \NN$ that
\[
x^{c}_{i} \in \ball{\delta}{\beta_{a}}{2^{-e}} \Leftrightarrow (\exists b)\, \pair{a, b, e} \in W_{i} \Leftrightarrow \pair{i, \pair{a, e}} \in L^{*}.
\] 
Hence, $x^{c}$ is computable with respect to $\tau_{\delta}$, from which we obtain with Lemma~\ref{T_eff} that $(X_{c}, \tau_{\delta})$ is effective. Similarly, it follows that $x^{c}$ is also computable with respect to $\tau_{\delta^{c}}$. By Lemma~\ref{T_eff} we therefore have that $(X_{c}, \tau_{\delta^{c}})$ is effective as well. Hence, $(X_{c}, \tau_{\delta}, \tau_{\delta_{c}})$ is bi-effective. 

Next, let $h^{*} \in R^{(1)}$ with
\begin{multline*}
W_{h^{*}(\pair{a_{1}, a_{2}})} = \{\, \pair{b, c, e} \mid (\exists n_{1}) (\varphi_{a_{1}}(n_{1})\conv \wedge \varphi_{a_{1}}(n_{1}) \prec_{\tau_{\delta}} \pair{b, e}) \wedge \mbox{} \\
(\exists n_{2}) (\varphi_{a_{2}}(n_{2})\conv \wedge \varphi_{a_{2}}(n_{2}) \prec_{\tau_{\delta^{c}}} \pair{c, e}) \,\}.
\end{multline*}
Then, if $\varphi_{a_{1}} \in R^{(1)}$ is decreasing with respect to $ \prec_{\tau_{\delta}}$ so that the collection of all $B^{\tau_{\delta}}_{\varphi_{a_{1}}(m)}$ with $m \in \NN$ is a strong basis of the neighbourhood filter of some point $y \in X_{c}$ in topology $\tau_{\delta}$ and $\varphi_{a_{2}} \in R^{(1)}$ is decreasing with respect to $ \prec_{\tau_{\delta^{c}}}$ so that the collection of all $B^{\tau_{\delta^{c}}}_{\varphi_{a_{2}}(m)}$ with $m \in \NN$ is a strong basis of the neighbourhood filter of the same point $y \in X_{c}$ in topology $\tau_{\delta^{c}}$, 
we have
\begin{align*}
&y \in \ball{\delta}{\beta_{b}}{2^{-e}} \cap \ball{\delta^{c}}{\beta_{c}}{2^{-e}} \\
&\Leftrightarrow (\exists n_{1}) (\varphi_{a_{1}}(n_{1}) \prec_{\tau_{\delta}} \pair{b, e}) \wedge (\exists n_{2}) (\varphi_{a_{2}}(n_{2}) \prec_{\tau_{\delta^{c}}} \pair{c, e}) \\
&\Leftrightarrow \pair{b, c, e} \in W_{h^{*}(\pair{a_{1}, a_{2}})},
\end{align*}
from which we obtain that $x^{c}_{h^{*}(\pair{a_{1},a_{2}})} = y$, showing that $x^{c}$ effectively allows bi-limit passing.

Finally, let $f \in R^{(1)}$ with
\[
W_{f(i)} =  \set{\pair{a, e}}{(\exists b)\, \pair{a, b, e} \in W_{i}}.
\]
Then it follows for $i \in \dom(x^{c})$,
\[
\pair{a,e} \in W_{f(i)} \Longleftrightarrow 
(\exists b)\, \delta(\beta_{a}, x^{c}_{i}), \delta(x^{c}_{i}, \beta_{b}) < 2^{-e} \Longleftrightarrow \delta(\beta_{a}, x^{c}_{i}) < 2^{-e},
\]
where the last equivalence holds as $\QQQ$ is dense with respect to both topologies, $\tau_{\delta}$ and $\tau_{\delta^{c}}$. Consequently, $x^{wc}_{f(i)} = x^{c}_{i}$. That is, $x_{c} \le x^{wc}$.

Since $(X, \delta)$ is lower computable, there is some $g \in R^{(1)}$ with
\[
W_{g(b)} = \set{\pair{a, d, e}}{\delta(\beta_{a}, \beta_{b}), \delta(\beta_{b}, \beta_{d}) < 2^{-e}}.
\]
Then $\beta_{b} = x^{c}_{g(b)}$. That is $\beta \le x^{c}$.
\end{proof}
In what follows we will  always index the computable elements of a lower computable quasi-metric space by using  $x^{c}$, or an equivalent numbering.

Our next goal is to show that $\tau_{\delta}$ is effectively regular with respect to $\tau_{\delta^{c}}$. Let to this end $i \in \dom(x^{c})$ and $a, m \in \NN$ so that $x^{c}_{i} \in B^{\tau_{\delta}}_{\pair{a,m}}$. We have to effectively find some $b, n \in \NN$ so that $\pair{b, n} \prec_{B^{\tau_{\delta}}} \pair{a,m}$ and $x^{c}_{i} \in B^{\tau_{\delta}}_{\pair{b,n}}$. As we have seen in the proof of Lemma~\ref{lem-qmetreg} such numbers do exist.

Let $(X, \delta)$ be lower computable and $L$ witness the computability of $x^{c}$ with respect to $\tau_{\delta}$. Then
\[
E = \set{\pair{i, a, m, b, n}}{\pair{b, n} \prec_{B^{\tau_{\delta}}} \pair{a,m} \wedge \pair{i, \pair{b,n}} \in L}
\]
is computably enumerable. For $i, a, m \in \NN$ let $i', a', m', b', n'$ be the first element in some fixed enumeration of $E$ with $i' = i$, $a' = a$ and $m' = m$. Define $s(i, \pair{a, m}) = \pair{b', n'}$. Then $s$ is computable and for $i \in \dom(x)$ and $a, m \in  \NN$  with $x_{i} \in B^{\tau_{\delta}}_{\pair{b,n}}$ we have that $s(i,\pair{a,m})\conv$ and $x_{i} \in B^{\tau_{\delta}}_{s(i, \pair{a,m})} \subseteq B^{\tau_{\delta}}_{\pair{a, m}}$.

Now, in order to proceed as in the proof of Lemma~\ref{lem-qmetreg}, we need that also the set $\set{\pair{a, b, c, e}}{\delta(\beta_{a}, \beta_{b}) >  c \cdot 2^{-e}}$ is computably enumerable.

\begin{definition}\label{dn-compqmet}
Let  $(X, \delta)$ be a quasi-metric space and $\QQQ$ a countable dense subset with respect to $\tau_{\delta^{*}}$. Moreover, let $\fun{\beta}{\NN}{\QQQ}$ be a total numbering of $\QQQ$. $(X, \delta)$ is \emph{computable} if it is lower computable and, in addition, the set
\[
\set{\pair{a, b, c, e}}{\delta(\beta_{a}, \beta_{b}) > c \cdot 2^{-e}}
\]
is computably enumerable.
\end{definition}

\begin{example}\label{ex-compreal}
Let $\QQQ$ be the set of all dyadic rationals and set $\beta_{\pair{a,b,c,e}} = (a - b) \cdot 2^{c - e}$. Then $\QQQ$ is dense in $\RR_{c}$ with respect to $\LLL \vee \UUU$. Since the order on the rational numbers is decidable, it follows that both $(\RR_{c}, \delta_{L})$ and $(\RR_{c}, \delta_{U})$ are computable. 
\end{example}

So, assume that $(X, \delta)$ is even computable. As we have seen in the preceding lemma,  $x^{c}$ is bi-computable. Then it follows that the set
\[
A = \{\, \pair{i, a, m, b, c} \mid \delta(\beta_{a}, \beta_{b}) > 2^{-\pi_{2}(s(i, \pair{a,m})} \wedge 
\delta(\beta_{a}, \beta_{b}) > 2^{-(c-1)} + 2^{-\pi_{2}(s(i, \pair{a,m})} \,\}
\]
is computably enumerable. Hence, there is a computable function $t \in R^{(2)}$ with 
\[
W_{t(i, \pair{a, m})} = \set{\pair{b, c}}{\pair{i,a,m,b,c} \in A}.
\]
 As in the proof of Lemma~\ref{lem-qmetreg} it follows for every $\pair{b, c} \in W_{t(i, \pair{a, m})}$ that $\ball{\delta^{c}}{\beta_{b}}{2^{-c}}$ and $B^{\tau_{\delta}}_{s(i,\pair{a, m})}$ are disjoint. Moreover, the complement of $B^{\tau_{\delta}}_{\pair{a,m}}$ is contained in $\bigcup \{\, \ball{\delta^{c}}{\beta_{b}}{2^{-c}} \mid \pair{b,c} \in W_{t(i, \pair{a, m})} \,\}$. By definition the latter set is $L^{\sigma}_{t(i, \pair{a,m})}$. 

This shows that $\tau_{\delta}$ is effectively regular with respect to $\tau_{\delta^{c}}$. In the same way it follows that  $\tau_{\delta^{c}}$ is effectively regular with respect to $\tau_{\delta}$. 
\begin{theorem}\label{thm-effqmetpairreg}\sloppy
Let $(X, \delta)$ be a quasi-metric space and $\QQQ$ a countable subset that is dense with respect to $\tau_{\delta^{*}}$ so that $(X, \delta)$ is computable.  Then $(X_{c}, \tau_{\delta}, \tau_{\delta^{c}})$ is an effectively pairwise regular bi-effective bi-topological space.
\end{theorem}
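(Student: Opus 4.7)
The plan is to observe that bi-effectivity has already been established in the preceding lemma (where $x^{c}$ was shown to be bi-acceptable and both $(X_{c}, \tau_{\delta})$ and $(X_{c}, \tau_{\delta^{c}})$ were shown to be effective), so what remains is to produce the functions $s, t \in P^{(2)}$ witnessing effective regularity in both directions. In fact, the discussion immediately preceding the theorem already lays out the construction for the direction $\tau_{\delta}$ regular w.r.t. $\tau_{\delta^{c}}$, and I would formalise this by defining $s$ using a search over the c.e.\ relation built from $\prec_{B^{\tau_{\delta}}}$ and the witness $L$ of computability of $x^{c}$, and defining $t$ via the s-m-n theorem applied to the c.e.\ set $A$ exhibited in the text.

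More concretely, for $s$: given $i$ and $\langle a, m \rangle$ with $x^{c}_{i} \in B^{\tau_{\delta}}_{\langle a, m \rangle}$, Lemma~\ref{lem-qmetreg} guarantees the existence of $\langle b', n' \rangle \prec_{B^{\tau_{\delta}}} \langle a, m \rangle$ with $x^{c}_{i} \in B^{\tau_{\delta}}_{\langle b', n' \rangle}$ (taking $u = \beta_{b'}$ and $2^{-n'} \le p/2$). Since $\prec_{B^{\tau_{\delta}}}$ and $L$ are both c.e., we can set $s(i, \langle a, m \rangle)$ to be the first such pair located in a fixed enumeration; verification of parts (1) and (2) of Definition~\ref{dn-effreg} is then immediate. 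For $t$: following the recipe of Lemma~\ref{lem-qmetreg}, the set $A$ displayed in the text is c.e.\ precisely because $(X, \delta)$ is computable (the upper bound on $\delta(\beta_a, \beta_b)$ uses computable enumerability of $\{\,\langle a,b,c,e\rangle \mid \delta(\beta_a, \beta_b) > c\cdot 2^{-e}\,\}$), so the s-m-n theorem yields a $t \in R^{(2)}$ with $W_{t(i, \langle a, m \rangle)}$ enumerating pairs $\langle b, c \rangle$ such that $\ball{\delta^{c}}{\beta_{b}}{2^{-c}}$ both misses $B^{\tau_{\delta}}_{s(i,\langle a, m \rangle)}$ (yielding (4)) and together cover $\overline{B^{\tau_{\delta}}_{\langle a, m \rangle}}$ (yielding (3)); the covering argument is exactly the one carried out in the proof of Lemma~\ref{lem-qmetreg}, applied at an arbitrary $z$ in the complement and using density of $\QQQ$ with respect to $\tau_{\delta^{*}}$ to produce the required $v = \beta_{b}$.

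For the dual direction, $\tau_{\delta^{c}}$ effectively regular w.r.t. $\tau_{\delta}$, I would invoke symmetry: since the conditions in Definition~\ref{dn-compqmet} are invariant under swapping $\delta$ with $\delta^{c}$ (swapping arguments $a, b$ in $\beta$ turns the ``$<$'' set for $\delta$ into the corresponding ``$<$'' set for $\delta^{c}$, and likewise for ``$>$''), the quasi-metric space $(X, \delta^{c})$ is again computable with the same dense subset $\QQQ$ and the same numbering $\beta$; moreover $X_{c}$ and $x^{c}$ are literally unchanged. Applying the construction above with $\delta$ and $\delta^{c}$ interchanged therefore yields the required pair $s', t' \in P^{(2)}$ for the other direction.

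The technically delicate step is the verification that $\overline{B^{\tau_{\delta}}_{\langle a, m \rangle}} \subseteq L^{\tau_{\delta^{c}}}_{t(i, \langle a, m \rangle)}$, i.e., that every $z$ outside the basic ball is caught by some $\ball{\delta^{c}}{\beta_{b}}{2^{-c}}$ generated by our c.e.\ set $A$; this is where the interplay between the strict inequality $\delta(\beta_{a}, \beta_{b}) > \cdots$ (needing upper computability) and the use of $\QQQ$ as a dense base is essential and is exactly the calculation already performed in the proof of Lemma~\ref{lem-qmetreg}. Once that lemma is invoked, the rest of the proof is bookkeeping with c.e.\ indices.
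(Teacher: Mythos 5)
Your proposal is correct and follows essentially the same route as the paper, which establishes bi-effectivity via the preceding lemma on $x^{c}$ and obtains the functions $s$ and $t$ exactly as you describe --- $s$ by searching the c.e.\ set built from $\prec_{B^{\tau_{\delta}}}$ and the witness of computability of $x^{c}$, and $t$ via s-m-n applied to the set $A$, with the covering and disjointness verifications delegated to the computations in Lemma~\ref{lem-qmetreg} and the dual direction obtained by the same symmetry argument. No gaps.
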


\section{The continuity theorem}\label{sec-cont}

As already said, in Markov-style constructive mathematics objects are constructed by algorithmic instructions. So, in order to define a function on such objects one has to say how it algorithmically transforms the algorithms underlying the objects. Formalised algorithms are pieces of text and can therefore coded by natural numbers.

Let ${\TTT}^\prime = (T^\prime, \tau^\prime)$ be a further countable topological space with a countable basis ${\BBB}^\prime$, and let $x^\prime$ and $B^\prime$, respectively, be numberings of $T^\prime$ and ${\BBB}^\prime$. 
\begin{definition}\label{dn-markovconst}
A map $F \colon T \rightarrow T^\prime$ is an {\em effective operator}, if there is a function $f \in P^{(1)}$ such that $f(i)\mathclose\downarrow\in \dom(x^\prime)$ and $F(x_i) =
x^\prime_{f(i)}$, for all $i \in \dom(x)$.
\end{definition}

The \emph{continuity problem} is the question whether effective operators are (effectively) continuous.

\begin{definition}\label{dn-effcont}
A map $\fun{F}{T}{T'}$ is said to be
\begin{enumerate}
\item\label{dn-effcont-1}
{\em effectively pointwise continuous}, if there is a function $h \in P^{(2)}$ such that
for all $i \in \dom(x)$ and $n\in \dom(B^\prime)$ with 
$F(x)_i\in B_n^\prime$, $h(i, n)\mathclose\downarrow\in \dom(B)$, 
$x_i\in B_{h(i, n)}$, and $F[B_{h(i, n)}] \subseteq B_n^\prime$

\item\label{dn-effcont-2} 
{\em effectively continuous\/}, if there is a
function $g \in P^{(1)}$ such that for all $n \in \dom(B^\prime)$,
$g(n)\mathclose\downarrow \in \dom(L^\tau)$ and $F^{-1}[B_n^\prime] =
L_{g(n)}^\tau$.
\end{enumerate}
\end{definition}

\begin{lemma}[\cite{sp}]\label{compar_cont}\hfill
\begin{enumerate}
\item\label{compar_cont_1} Let $x$ be computable. If $F$ is effectively
          continuous, then it is also effectively pointwise continuous.
\item\label{compar_cont_2} Let $\TTT$ be effective and computably
          separable, $x$ be acceptable, $x^\prime$ be computable, and
          $F$ be effective. Then, if $F$ is effectively pointwise
          continuous, it is also effectively continuous.
\end{enumerate}
\end{lemma}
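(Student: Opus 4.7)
\textbf{Part (\ref{compar_cont_1}).} The plan is to construct $h$ from $g$ by a dovetailing search. Given $i \in \dom(x)$ and $n \in \dom(B^\prime)$ with $F(x_i) \in B_n^\prime$, we have $x_i \in F^{-1}[B_n^\prime] = L^\tau_{g(n)}$, so some $a \in W_{g(n)}$ satisfies $x_i \in B_a$. Since computability of $x$ makes the predicate $x_i \in B_a$ uniformly semi-decidable in $i$ and $a$, I would enumerate $W_{g(n)}$ in parallel with this membership test and define $h(i,n)$ to be the first $a$ for which $x_i \in B_a$ is confirmed. The required conditions $x_i \in B_{h(i,n)}$ and $F[B_{h(i,n)}] \subseteq B_n^\prime$ then follow from $B_{h(i,n)} \subseteq L^\tau_{g(n)} = F^{-1}[B_n^\prime]$.

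\textbf{Part (\ref{compar_cont_2}).} Here the plan is to exploit the effective limit-passing function $pt$ furnished by the acceptability of $x$. I would define
\[
W_{g(n)} := \{\, h(pt(m),n) \mid m \in \dom(pt) \wedge F(x_{pt(m)}) \in B_n^\prime \,\}.
\]
This set is c.e.: $\dom(pt)$ is c.e.\ because $pt \in P^{(1)}$, and using the index-translator $f$ for $F$ together with the computability of $x^\prime$, the condition $F(x_{pt(m)}) = x^\prime_{f(pt(m))} \in B_n^\prime$ is semi-decidable. The inclusion $L^\tau_{g(n)} \subseteq F^{-1}[B_n^\prime]$ is immediate from effective pointwise continuity applied at each $pt(m)$.

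The reverse inclusion $F^{-1}[B_n^\prime] \subseteq L^\tau_{g(n)}$ is the main obstacle, and it is here that every remaining hypothesis enters. Given $y \in F^{-1}[B_n^\prime]$, I fix any $j \in \dom(x)$ with $x_j = y$. By computability of $x$, the set $\{\, a \mid x_j \in B_a \,\}$ is c.e.\ uniformly in $j$, and combining this enumeration with the effective strong-basis function $sb$ (plus the s-m-n theorem) yields an index $m$ of a normed computable enumeration of a strong basis of $\NNN(y)$ --- essentially the construction underlying Proposition~\ref{ex_numbering}. Acceptability then gives $pt(m) \mathclose\downarrow$ with $x_{pt(m)} = y$, so $F(x_{pt(m)}) \in B_n^\prime$ places $h(pt(m),n)$ into $W_{g(n)}$, and effective pointwise continuity yields $y \in B_{h(pt(m),n)} \subseteq L^\tau_{g(n)}$. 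The delicate point is that merely enumerating $h(i,n)$ over dense-base indices $i$ with $F(x_i) \in B_n^\prime$ would produce a Lacombe set only \emph{dense} in $F^{-1}[B_n^\prime]$, since $B_{h(i,n)}$ is anchored at $x_i$ and need not contain nearby points; routing indices through $pt$ is precisely what promotes dense coverage to full equality.
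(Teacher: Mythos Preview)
The paper does not give its own proof of this lemma; it is imported from \cite{sp}, so there is nothing here to compare against and the following evaluates your argument on its own terms.

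Part~(\ref{compar_cont_1}) is correct and is the standard dovetailing argument.

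In Part~(\ref{compar_cont_2}) there is a genuine gap in the forward inclusion $L^\tau_{g(n)} \subseteq F^{-1}[B_n^\prime]$. You enumerate over all $m \in \dom(pt)$, but the specification of effective limit passing only constrains $pt$ on \emph{valid} inputs --- indices of normed computable enumerations that actually converge to a point of $T$. Nothing rules out $\dom(pt)$ containing an $m$ that is not such an index. For such an $m$ one may have $pt(m)\conv$ with $pt(m)\notin\dom(x)$; then $f(pt(m))$ need not lie in $\dom(x')$, your semi-decision test $\langle f(pt(m)),n\rangle\in L'$ can pass spuriously, and $h(pt(m),n)$ can converge to an arbitrary $B$-index (recall $B$ is total) whose basic open set is not contained in $F^{-1}[B'_n]$. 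The clause ``immediate from effective pointwise continuity applied at each $pt(m)$'' is exactly where this fails: Definition~\ref{dn-effcont}(\ref{dn-effcont-1}) only constrains $h$ at first arguments lying in $\dom(x)$, and you have not secured $pt(m)\in\dom(x)$.

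Your reverse-inclusion argument is sound, and your diagnosis that the naive dense-base enumeration $\{h(k(a),n): F(x_{k(a)})\in B'_n\}$ may cover only a dense subset of $F^{-1}[B_n^\prime]$ is correct. But this is precisely the tension: restricting to dense-base indices $k(a)$ guarantees membership in $\dom(x)$ (hence correctness of each enumerated set) at the apparent cost of coverage, while ranging over all of $\dom(pt)$ buys coverage at the cost of correctness. Your construction opts for the second and does not close the resulting hole. The argument in \cite{sp} has to do additional work to reconcile the two; simply routing indices through $pt$ over all of $\dom(pt)$ is not enough.
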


In \cite{sp,sy:84} the effective pointwise continuity of effective operators has been studied in a general context. One of the main requirements $F$ must fullfill is that of having a witness for non-inclusion.
\begin{definition}\label{dn-witnonincl}
Suppose that $\fun{F}{T}{T'}$. Then $F$ is said to have a {\em witness for non-inclusion} if there exist
functions $s\in P^{(2)}$ and $r \in P^{(3)}$ such that for all $i \in
\dom(x)$, $n\in \dom(B)$ and $m \in \dom(B^\prime)$ the following hold:
\begin{enumerate}
\item\label{dn-witnonincl-1} If $F(x_i) \in B_m^\prime$, then $s(i ,m)
              \mathclose\downarrow\in \dom(M)$ and $x_i \in
              M_{s(i, m)} \subseteq F^{-1}[B_m^\prime]$.
\item\label{dn-witnonincl-2} If, in addition, $F[B_n] \not\subseteq
              B_m^\prime$, then also $r(i, n, m)\mathclose\downarrow\in
              \dom(x)$ and $x_{r(i, n, m)} \in B_n \setminus
              M_{s(i, m)}$.
\end{enumerate}
\end{definition}

\begin{theorem}[\cite{sp}]\label{general-cont}
Let $\TTT$ be effective, $x$ be acceptable and $x^\prime$ be
computable. Moreover, let $F: T \rightarrow T^\prime$ be an effective operator 
that has a witness for non-inclusion. Then $F$ must be
effectively pointwise continuous. If, in addition, $\TTT$ is
computably separable, then $F$ is even effectively continuous. 
\end{theorem}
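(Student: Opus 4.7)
The plan is to prove the two assertions in turn. For the first --- effective pointwise continuity --- I follow a Kreisel--Lacombe--Shoenfield--style argument driven by the witness-for-non-inclusion hypothesis. For the second, I invoke Lemma~\ref{compar_cont}(\ref{compar_cont_2}), whose hypotheses --- effectivity of $\TTT$, computable separability, acceptability of $x$, computability of $x^\prime$, and effectivity of $F$ --- are all in force by assumption.

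For effective pointwise continuity, fix $i \in \dom(x)$ and $m \in \dom(B^\prime)$ with $F(x_i) \in B^\prime_m$. Clause~(\ref{dn-witnonincl-1}) of the witness hypothesis yields $s(i,m)\conv$ with $x_i \in M_{s(i,m)} \subseteq F^{-1}[B^\prime_m]$, so it suffices to produce, uniformly computably in $(i,m)$, some $n \in \dom(B)$ with $x_i \in B_n \subseteq M_{s(i,m)}$; for then automatically $F[B_n] \subseteq B^\prime_m$ and one sets $h(i,m) := n$. Because $x$ is computable and $\TTT$ is effective, a normed computable enumeration $(f_i(k))_{k\in\NN}$ of basic open neighbourhoods of $x_i$ forming a strong basis of $\NNN(x_i)$ can be generated uniformly from $i$ (using $sb$ to produce successive $\prec_B$-refinements). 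The algorithm defining $h(i,m)$ dovetails over $k$ and attempts to verify $B_{f_i(k)} \subseteq M_{s(i,m)}$ by enumerating $B_{f_i(k)}$ --- a Lacombe set --- and checking each listed element against the computably enumerable witness for $M_{s(i,m)}$; the first $k$ at which this verification can be completed yields the output.

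The main obstacle is to show that this search is guaranteed to halt. Suppose, for contradiction, that for every $k$ the verification fails, so $F[B_{f_i(k)}] \not\subseteq B^\prime_m$ for all $k$. Then clause~(\ref{dn-witnonincl-2}) of the witness hypothesis furnishes, uniformly in $k$, indices $j_k := r(i, f_i(k), m)$ with $x_{j_k} \in B_{f_i(k)} \setminus M_{s(i,m)}$. Using effective limit passing (acceptability of $x$) and the fact that $(f_i(k))$ is $\prec_B$-decreasing and converges to $x_i$, a single point of $T$ can be extracted computably from the indices $j_k$. A standard application of the recursion theorem then produces a computable index $e$ whose associated computation copies $\varphi_i$ on inputs where the dovetailed search has not yet succeeded, and switches to mimicking suitably chosen $\varphi_{j_k}$ once it does. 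By acceptability, $x_e$ is well defined; by construction, the question whether $e$ belongs to the computably enumerable set witnessing membership in $M_{s(i,m)}$ forces $x_e$ simultaneously to be $x_i \in M_{s(i,m)}$ and to lie outside $M_{s(i,m)}$ (via the counterexamples $x_{j_k}$), the required contradiction. I expect this recursion-theoretic bookkeeping --- carefully interlocking the normed convergence, the witness function $r$, and the complete enumerability of $M_{s(i,m)}$ --- to be the most delicate step.

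With effective pointwise continuity established, the second assertion follows directly: under the additional hypothesis of computable separability, Lemma~\ref{compar_cont}(\ref{compar_cont_2}) upgrades effective pointwise continuity to effective continuity, giving the computable inverse-image function $g$ demanded by Definition~\ref{dn-effcont}(\ref{dn-effcont-2}).
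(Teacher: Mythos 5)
This theorem is imported from \cite{sp} and the present paper gives no proof of it, so there is nothing internal to compare against; judging your argument on its own terms, it has a genuine gap at its central step. Your algorithm for $h(i,m)$ halts at ``the first $k$ at which the verification $B_{f_i(k)} \subseteq M_{s(i,m)}$ can be completed'', where the verification consists of enumerating the elements of $B_{f_i(k)}$ and checking each against the c.e.\ witness for $M_{s(i,m)}$. This event is not semi-decidable: $B_{f_i(k)}$ is in general infinite, so even when the inclusion holds the check never terminates (the condition is $\Pi_2$, not $\Sigma_1$). Consequently the function $h$ you describe is not partial computable, and your subsequent ``suppose the search never halts, then $F[B_{f_i(k)}] \not\subseteq B'_m$ for all $k$'' is a non sequitur --- the search as defined never halts regardless. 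The recursion-theoretic paragraph that follows is attached to this broken halting event and also misuses the hypotheses: effective limit passing extracts a point from a normed $\prec_B$-decreasing enumeration of \emph{basic open sets} converging to it, not from a sequence of point-indices $j_k$, and ``copying $\varphi_i$'' is not meaningful for an arbitrary acceptable numbering $x$.

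The standard Kreisel--Lacombe--Shoenfield-style repair (and the mechanism of the proof in \cite{sp}) is to make the halting event itself c.e.\ by self-reference: using the recursion theorem, build an index $e$ of a normed computable enumeration that lists basic neighbourhoods of $x_i$ while watching whether $pt(e)$ is enumerated into the set $W_{n_0}$ witnessing complete enumerability of $M_{s(i,m)}$. If certification appears at stage $k$, apply $r(i,f_i(k),m)$; were $F[B_{f_i(k)}] \not\subseteq B'_m$, this yields a point of $B_{f_i(k)} \setminus M_{s(i,m)}$ toward which the enumeration can be continued (using $sb$ and the strong basis property), so that $x_{pt(e)} \notin M_{s(i,m)}$, contradicting the certification. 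If certification never appears, the enumeration converges to $x_i \in M_{s(i,m)}$, contradicting complete enumerability. Hence certification appears and $F[B_{f_i(k)}] \subseteq B'_m$ at that stage, and $h(i,m) = f_i(k)$. Your final step --- upgrading to effective continuity via Lemma~\ref{compar_cont}(\ref{compar_cont_2}) under computable separability --- is correct as stated.
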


As was shown in addition, for computably separable semi-regular effective spaces the requirement that $F$ has a witness for non-inclusion  is also a necessary condition for  $F$ to be effectively pointwise continuous.

\begin{proposition}[\cite{sp}]\label{cont-eff}
Let $\TTT'$ be an effective $T_0$-space such that the strong
inclusion relation $\prec_{B'}$ is computably enumerable. Moreover, let $x$ be
computable and $x'$ allow effective limit passing.  Then every
effectively continuous map $\fun{F}{T}{T'}$ is an effective operator.
\end{proposition}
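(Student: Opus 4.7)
The plan is, given $i \in \dom(x)$, to computably produce an index of a normed computable enumeration of basic open sets of $\tau'$ that converges to $F(x_i)$, and then apply effective limit passing to turn that index into an $x'$-index of $F(x_i)$. Composing the two steps yields the computable function $f$ witnessing effectivity of $F$.

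First, I would show that the predicate ``$F(x_i) \in B'_n$'' is computably enumerable in $(i,n)$. Since $F$ is effectively continuous, there is $g \in P^{(1)}$ with $F^{-1}[B'_n] = L^\tau_{g(n)}$, so
\[
F(x_i) \in B'_n \Longleftrightarrow x_i \in L^\tau_{g(n)} \Longleftrightarrow (\exists a)(a \in W_{g(n)} \wedge x_i \in B_a),
\]
and the inner condition is c.e.\ in $(i,a)$ because $x$ is computable. Totality of $B'$, part of the effectiveness of $\TTT'$, lets $n$ range over all of $\NN$.

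Next, I would build, uniformly in $i$, a computable $\prec_{B'}$-decreasing sequence $(n_k)_{k\in\NN}$ that enumerates a strong basis of $\NNN(F(x_i))$. Let $(n_k^*)_k$ be a computable enumeration, uniform in $i$, of $\set{n}{F(x_i) \in B'_n}$, available from the previous step. Set $n_0 := n_0^*$ and recursively let $n_{k+1}$ be the first $m$ returned by a dovetailed search for $m$ satisfying $m \prec_{B'} n_k$, $m \prec_{B'} n_{k+1}^*$, and $F(x_i) \in B'_m$. Such $m$ exists by the strong basis property applied to $F(x_i) \in B'_{n_k} \cap B'_{n_{k+1}^*}$, and all three conditions are c.e.\ (the first two because $\prec_{B'}$ is c.e., the third by the preceding step), so the search terminates. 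By construction the sequence is $\prec_{B'}$-decreasing; and for any $B'_m$ containing $F(x_i)$ the index $m$ occurs eventually as some $n_{k+1}^*$, giving $n_{k+1} \prec_{B'} m$ and hence $B'_{n_{k+1}} \subseteq B'_m$. So $(n_k)_k$ enumerates a strong basis of $\NNN(F(x_i))$ and converges to $F(x_i)$.

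The s-m-n theorem then supplies a computable $q \in R^{(1)}$ with $\varphi_{q(i)}(k) = n_k$, so $q(i)$ is an index of a normed computable enumeration converging to $F(x_i)$. If $pt \in P^{(1)}$ witnesses effective limit passing for $x'$, then $f := pt \circ q$ satisfies $f(i) \conv \in \dom(x')$ and $x'_{f(i)} = F(x_i)$, proving that $F$ is an effective operator. The main obstacle I anticipate is this construction of the decreasing refinement: since the whole point of the proof is to produce an $x'$-index of $F(x_i)$, I cannot invoke the effective strong basis function $sb'$ of $\TTT'$ on $F(x_i)$ directly, and must instead combine the c.e.\ character of $\prec_{B'}$ with the strong basis property via unbounded search, using the $x$-index $i$ as a surrogate for the absent $x'$-index.
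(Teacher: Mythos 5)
Your argument is correct and is essentially the standard proof of this result from the cited source: one first shows that $\set{\pair{i,n}}{F(x_i)\in B'_n}$ is computably enumerable via $F^{-1}[B'_n]=L^\tau_{g(n)}$ and the computability of $x$, then refines the resulting enumeration into a normed ($\prec_{B'}$-decreasing) one by unbounded search over the computably enumerable relation $\prec_{B'}$ together with the strong basis property, and finally applies effective limit passing. No gaps; the only points worth making explicit are that $\set{n}{F(x_i)\in B'_n}$ is non-empty (so the uniform enumeration $(n^*_k)_k$ is total) and that the descending chain $(n_k)_k$ is indeed a strong basis of $\NNN(F(x_i))$ by transitivity of $\prec_{B'}$, both of which are immediate.
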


In what follows we will show that effective operators $\fun{F}{T}{T'}$ do have a witness for non-inclusion, if $\TTT' = (T', \tau', \sigma')$ is a countable second-countable bi-topological space so that $\tau'$ is effectively regular with respect to $\sigma'$. Assume to this end that $\BBB^{\tau'}$ and $\BBB^{\sigma'}$, respectively, are countable bases of $\tau'$ and $\sigma'$ with indexings $B^{\tau'}$ and/or $B^{\sigma'}$.

\begin{proposition}\label{prop-wit}
Let $\TTT = (T, \tau)$ be effective and computably separable with acceptable numbering $x$ of $T$. Moreover, let $\TTT' = (T', \tau', \sigma')$ be a countable second-countable bi-topological space and $x'$ a bi-computable numbering of $T'$. Finally, let $\fun{F}{T}{T'}$ be an effective operator. 
If $\tau'$ is effectively regular with respect to $\sigma'$, then $F$ has a witness for non-inclusion.
\end{proposition}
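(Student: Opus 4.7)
The plan is to build $s$ and $r$ by composing three enumeration primitives: the realizer $f \in P^{(1)}$ of $F$ (so $f(i)\conv \in \dom(x')$ and $F(x_i) = x'_{f(i)}$ for $i \in \dom(x)$); the witnesses $s_0, t_0 \in P^{(2)}$ for effective regularity of $\tau'$ with respect to $\sigma'$; and the c.e.\ sets $L$, $L^{\tau'}$, $L^{\sigma'}$ witnessing, respectively, computability of $x$ and bi-computability of $x'$.

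For the inclusion witness $s$: given $i \in \dom(x)$ and $m \in \dom(B^{\tau'})$ with $F(x_i) \in B^{\tau'}_m$, apply effective regularity to the pair $(f(i), m)$ to obtain
\[
F(x_i) \in B^{\tau'}_{s_0(f(i),m)} \subseteq B^{\tau'}_m, \quad \overline{B^{\tau'}_m} \subseteq L^{\sigma'}_{t_0(f(i),m)}, \quad B^{\tau'}_{s_0(f(i),m)} \cap L^{\sigma'}_{t_0(f(i),m)} = \emptyset.
\]
Since the predicate $\pair{f(k), s_0(f(i),m)} \in L^{\tau'}$ is uniformly c.e.\ in $(i,m,k)$, the s-m-n theorem yields $s \in P^{(2)}$ with $W_{s(i,m)} = \{k \in \NN : \pair{f(k), s_0(f(i),m)} \in L^{\tau'}\}$. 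Because $x$ is onto $T$, this forces $M_{s(i,m)} = F^{-1}[B^{\tau'}_{s_0(f(i),m)}]$, which contains $x_i$ and lies inside $F^{-1}[B^{\tau'}_m]$.

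For the non-inclusion witness $r$: define $r(i,n,m)$ by enumerating the c.e.\ set
\[
\{k \in \NN : \pair{k,n} \in L \wedge (\exists a \in W_{t_0(f(i),m)})\, \pair{f(k), a} \in L^{\sigma'}\}
\]
and returning its first element. If $F[B_n] \not\subseteq B^{\tau'}_m$, choose any $z \in B_n$ with $F(z) \in \overline{B^{\tau'}_m} \subseteq L^{\sigma'}_{t_0(f(i),m)}$, so $F(z) \in B^{\sigma'}_a$ for some $a \in W_{t_0(f(i),m)}$; writing $z = x_k$ (possible since $x$ is onto) shows the search terminates. Then $x_{r(i,n,m)} \in B_n$, and $F(x_{r(i,n,m)}) \in L^{\sigma'}_{t_0(f(i),m)}$ together with the disjointness clause force $F(x_{r(i,n,m)}) \notin B^{\tau'}_{s_0(f(i),m)}$, so $x_{r(i,n,m)} \notin M_{s(i,m)}$.

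The main conceptual obstacle is recognising what effective regularity actually buys: the complement $\overline{B^{\tau'}_m}$ is in general not c.e., but effective regularity surrounds it with the Lacombe set $L^{\sigma'}_{t_0(f(i),m)}$, whose elements can be enumerated thanks to the $\sigma'$-component of bi-computability of $x'$. This turns ``search in the complement'' into ``search in a c.e.\ outer envelope'', which still suffices because the disjointness of $B^{\tau'}_{s_0(f(i),m)}$ and $L^{\sigma'}_{t_0(f(i),m)}$ certifies that any hit in the envelope stays outside $M_{s(i,m)}$. Once this idea is in place, everything reduces to routine assembly of c.e.\ enumerations via the s-m-n theorem, and computable separability of $\TTT$ is not needed for the construction itself (it is simply carried along for downstream use in Theorem~\ref{general-cont}).
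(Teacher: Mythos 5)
Your construction of $s$ and your overall architecture are exactly the paper's: effective regularity supplies a c.e.\ outer envelope $L^{\sigma'}_{t_0(f(i),m)}$ of $\overline{B^{\tau'}_m}$ that is disjoint from $B^{\tau'}_{s_0(f(i),m)}$, so any point of $B_n$ whose image lands in the envelope witnesses non-inclusion. But your construction of $r$ has a genuine gap: you enumerate
\[
\{\,k \in \NN : \pair{k,n} \in L \wedge (\exists a \in W_{t_0(f(i),m)})\, \pair{f(k), a} \in L^{\sigma'}\,\}
\]
over \emph{all} $k\in\NN$ and return the first element found. The sets $L$ and $L^{\sigma'}$ characterise membership only for indices in $\dom(x)$ and $\dom(x')$ respectively; outside these domains they may contain arbitrary pairs, and $f$ is constrained only on $\dom(x)$. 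Since $\dom(x)$ need not be computably enumerable, the first $k$ your search produces may lie outside $\dom(x)$, in which case $x_{r(i,n,m)}$ is undefined and clause (\ref{dn-witnonincl-2}) of Definition~\ref{dn-witnonincl} fails. Your surjectivity argument establishes only that the set is nonempty (termination), not that the element actually returned is a legitimate index.

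This is precisely where computable separability enters, contrary to your closing remark that it ``is not needed for the construction itself''. The paper restricts the search to the dense base: with $k\in R^{(1)}$ total and $\QQQ=\set{x_{k(a)}}{a\in\NN}$ enumerable, every candidate $k(a)$ is guaranteed to lie in $\dom(x)$, so the conditions $\pair{k(a),n}\in L$ and $\pair{f(k(a)),\ell}\in L'$ carry genuine semantic content. The price is that termination is no longer immediate: one must show that $B_n$ meets $F^{-1}[L^{\sigma'}_{t_0(f(i),m)}]$ \emph{within} $\QQQ$, and since $F$ is not yet known to be continuous this preimage is only completely enumerable, not open, so density of $\QQQ$ alone does not give a point of $\QQQ$ in the intersection. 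The paper invokes Lemma~\ref{intersec-DB} (which itself requires effectivity of $\TTT$, computable separability, and acceptability of $x$) exactly at this point. Both the restriction of the search to the dense base and the appeal to Lemma~\ref{intersec-DB} are missing from your argument.
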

\begin{proof}
Let $k \in R^{(1)}$ so that $\QQQ = \set{x_{k(a)}}{a \in \NN}$ is dense in $\TTT$. Moreover, let $L, L' \subseteq \NN$, $s', t' \in P^{(2)}$, and $f \in P^{(1)}$, respectively, witness that $x$ is computable, $x'$ is computable with respect to $\sigma'$ and $\tau'$ is effectively regular with respect to $\sigma'$, and also $F$ is an effective operator. Then
\[
A = \set{\pair{i,m,n,a}}{\pair{k(a), n} \in L \wedge (\exists \ell \in W_{t'(i,n)})\, \pair{f(k(a)), \ell} \in L'}
\]
is computably enumerable. For $i, m, n \in \NN$ let $\pair{i',m',n',a'}$ be the first enumerated  element in a fixed enumeration of $A$ with $i' = i$, $m' = m$, and $n' = n$. Define $r(i,m,n) = k(a')$. Then $r \in P^{(3)}$.

Now, assume that $F(x_{i}) \in B^{\tau'}_{m}$. Then $s'(i,m)\conv \in \dom(B^{\tau'})$ and $F(x_{i}) \in B^{\tau'}_{s'(i,m)} \subseteq B^{\tau'}_{m}$. If, in addition, $F[B_{n}] \not\subseteq B^{\tau'}_{m}$ then $F[B^{\tau'}_{n}]$ hits $L^{\sigma'}_{t'(i,m)}$, i.e., $B_{n}$ hits $F^{-1}[L^{\sigma'}_{t'(i,m)}]$. Since the latter set is completely enumerable, it follows with Lemma~\ref{intersec-DB} that $B_{n}$ hits $F^{-1}[L^{\sigma'}_{t'(i,m)}] \cap \QQQ$. Thus, there is some $a \in \NN$ so that $x_{k(a)} \in B_{n}$ and $F(x_{k(a)}) \in L^{\sigma'}_{t'(i,m)}$, that is $F(x_{k(a)}) \notin B^{\tau'}_{s'(i,m)}$, which shows that $r(i,m,n)\conv \in \dom(x)$ and $x_{r(i,m,n)} \in B_{n} \setminus F^{-1}[B^{\tau'}_{s'(i,m)}]$.

Finally, let $L'' \subseteq \NN$ witness that $x'$ is computable with respect to $\tau'$. Then there is some $s \in R^{(2)}$ with $W_{s(i,m)} = \set{j \in \NN}{\pair{f(j), s'(i,m)} \in L''}$, which means that $M_{s(i,m)} = F^{-1}[B^{\tau'}_{s'(i,m)}]$.  
\end{proof}

As a consequence of the General Continuity Theorem~\ref{general-cont} we obtain the following answer to the continuity problem for bi-topological spaces.
\begin{theorem}\label{thm-contbitop}
Let $\TTT = (T, \tau)$ be effective and computably separable with acceptable numbering of $T$. Moreover, let $\TTT' = (T', \tau', \sigma')$ be a countable second-countable bi-topological space with a bi-computable numbering of $T'$. If $\tau'$ is effectively regular with respect to $\sigma'$, then every effective operator $\fun{F}{T}{T'}$ is effectively continuous with respect to $\tau$ and $\tau'$.
\end{theorem}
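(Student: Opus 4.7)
The plan is to simply combine the two main results that have been set up earlier in the section, namely Proposition~\ref{prop-wit} and the General Continuity Theorem~\ref{general-cont}. The theorem requires no new ideas; the work has been done in establishing that, under effective regularity of $\tau'$ with respect to $\sigma'$, an effective operator $F$ automatically admits a witness for non-inclusion.

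First I would check that the hypotheses of Proposition~\ref{prop-wit} are all available: $\TTT$ is effective and computably separable with acceptable numbering $x$, $\TTT'$ is a countable second-countable bi-topological space, $x'$ is bi-computable, $F$ is an effective operator, and $\tau'$ is effectively regular with respect to $\sigma'$. This yields functions $s\in P^{(2)}$ and $r \in P^{(3)}$ witnessing non-inclusion for $F$ in the sense of Definition~\ref{dn-witnonincl}.

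Next I would invoke Theorem~\ref{general-cont}. Its hypotheses ask that $\TTT$ be effective (given), $x$ be acceptable (given), and $x'$ be computable with respect to $\tau'$. The last point follows from bi-computability of $x'$, since by definition this means $x'$ is computable with respect to both topologies, in particular with respect to $\tau'$. Theorem~\ref{general-cont} then gives that $F$ is effectively pointwise continuous; moreover, since $\TTT$ is computably separable, the same theorem upgrades this conclusion to effective continuity with respect to $\tau$ and $\tau'$, which is the desired statement.

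There is no real obstacle here; the substance of the argument lies in Proposition~\ref{prop-wit}, where effective regularity of $\tau'$ with respect to $\sigma'$ is exploited to effectively produce, out of an apparent failure $F[B_n] \not\subseteq B^{\tau'}_m$, a concrete witness from the dense base $\QQQ$ via Lemma~\ref{intersec-DB}. The only minor bookkeeping in the present theorem is the remark that bi-computability entails computability with respect to $\tau'$, so that Theorem~\ref{general-cont} is indeed applicable.
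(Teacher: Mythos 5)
Your proposal is correct and follows exactly the route the paper intends: the theorem is stated as an immediate consequence of Proposition~\ref{prop-wit} (which supplies the witness for non-inclusion) combined with the General Continuity Theorem~\ref{general-cont}, using computable separability to upgrade pointwise continuity to effective continuity. Your observation that bi-computability of $x'$ yields computability with respect to $\tau'$ is the only bookkeeping needed, and it is handled correctly.
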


Let us next study some consequences of the above theorems.

\begin{definition}
 Let $\TTT = (T, \tau, \sigma)$ and $\TTT' = (T', \tau', \sigma')$, respectively, be countable second-countable bi-topological spaces with indexings $B$ and $B'$ of their bases and numberings $x, x'$ of $T$ and $T'$. A map $\fun{F}{T}{T'}$ is \emph{effectively bi-continuous} if $F$ is effectively continuous with respect to $\tau$ and $\tau'$ as well as with respect to $\sigma$ and $\sigma'$.
 \end{definition}
 
\begin{theorem}
Let $\TTT = (T, \tau, \sigma)$ be a countable second-countable bi-topological space with bi-computable numbering $x$. Moreover, let $\TTT' = (T', \tau', \sigma')$ be bi-effective such that $\prec_{B^{\tau'}}$ as well as $\prec_{B^{\sigma'}}$ are computably enumerable, and $x'$ be a numbering of $T'$ that allows effective bi-limit passing. Then every effectively bi-continuous map $\fun{F}{T}{T'}$  is an effective operator. 
\end{theorem}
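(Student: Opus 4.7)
The plan is to feed the effective bi-limit passing function $\pt$ with Gödel numbers of normed computable enumerations of basic $\tau'$- and $\sigma'$-neighbourhoods of $F(x_i)$, so that $f(i) := \pt(\pair{h_{\tau'}(i), h_{\sigma'}(i)})$ becomes the sought effective translator.

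First, effective bi-continuity furnishes $g_{\tau'}, g_{\sigma'} \in P^{(1)}$ with $F^{-1}[B^{\tau'}_n] = L^{\tau}_{g_{\tau'}(n)}$ and $F^{-1}[B^{\sigma'}_n] = L^{\sigma}_{g_{\sigma'}(n)}$. Since $x$ is bi-computable, hence computable with respect to $\tau$, the predicate ``$x_i \in L^{\tau}_{g_{\tau'}(n)}$''---equivalently ``$F(x_i) \in B^{\tau'}_n$''---is computably enumerable uniformly in $\pair{i, n}$: one dovetails, for each $m \in W_{g_{\tau'}(n)}$, the test $x_i \in B^{\tau}_m$ using the c.e.\ witness from $\tau$-computability. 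The analogous fact holds for $\sigma'$. By the s-m-n theorem I obtain $e_{\tau'}, e_{\sigma'} \in R^{(1)}$ with $W_{e_{\tau'}(i)} = \set{n}{F(x_i) \in B^{\tau'}_n}$ and $W_{e_{\sigma'}(i)} = \set{n}{F(x_i) \in B^{\sigma'}_n}$.

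Second, I convert $W_{e_{\tau'}(i)}$ into a normed computable enumeration converging to $F(x_i)$. Enumerate $W_{e_{\tau'}(i)}$ as $n_0, n_1, \ldots$; set $m_0 := n_0$, and given $m_k$ together with $n_{k+1}$, search by dovetailing over $\prec_{B^{\tau'}}$ and $W_{e_{\tau'}(i)}$ (both c.e.) for some $m \in W_{e_{\tau'}(i)}$ with $m \prec_{B^{\tau'}} m_k$ and $m \prec_{B^{\tau'}} n_{k+1}$, and set $m_{k+1} := m$. The search terminates because $\BBB^{\tau'}$ is a strong basis and $F(x_i) \in B^{\tau'}_{m_k} \cap B^{\tau'}_{n_{k+1}}$. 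By induction $m_k \prec_{B^{\tau'}} n_j$ for every $j < k$, so the resulting enumeration forms a strong basis of the $\tau'$-neighbourhood filter of $F(x_i)$. The s-m-n theorem delivers $h_{\tau'} \in R^{(1)}$ indexing this procedure, and $h_{\sigma'}$ is obtained by the same construction with $\sigma'$ in place of $\tau'$. Applying effective bi-limit passing yields $f(i) := \pt(\pair{h_{\tau'}(i), h_{\sigma'}(i)})\conv \in \dom(x')$ with $x'_{f(i)} = F(x_i)$, as required.

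The main obstacle is the second step. No index of $F(x_i)$ under $x'$ is available a priori---that is precisely what we are trying to compute---so the effective strong-basis function $sb$, which requires a point index as input, cannot be invoked. The construction must rely only on the computable enumerability of $\prec_{B^{\tau'}}$ (and of the neighbourhood set) together with the mere non-effective existence of common refinements, which is enough to guarantee that the dovetailed search terminates.
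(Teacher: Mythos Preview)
Your proof is correct and follows exactly the route the paper has in mind: the paper merely says the argument is a direct translation of \cite[Proposition~7.9]{sp} to the bi-topological case, and that proof proceeds precisely by computing, from the effective continuity data and the computability of $x$, the c.e.\ set of basic neighbourhoods of $F(x_i)$, refining it into a normed enumeration via the c.e.\ strong inclusion, and applying the limit-passing function---which you carry out in parallel for $\tau'$ and $\sigma'$ before feeding the pair into $\pt$. Your closing remark that $sb$ is unavailable (for lack of an $x'$-index of $F(x_i)$) and that the bare c.e.\ relation $\prec_{B^{\tau'}}$ together with the non-effective strong-basis property must do the work is exactly the point.
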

The proof is a direct translation of the proof of \cite[Proposition 7.9]{sp} to the bi-topological case.
 
 If $T$ contains a countable set $Z$ that is dense with respect to both topologies, $\tau$ and $\sigma$ then $\TTT = (T, \tau, \sigma)$ is called \emph{bi-separable}. In case that $Z$ is enumerable with respect to the given numbering of $T$, we say that $\TTT$ is \emph{computably bi-separable}.
 
\begin{theorem}\label{thm-marcont}
Let $\TTT = (T, \tau, \sigma)$ be bi-effective and computably bi-separable with bi-acceptable numbering $x$. Moreover, let $\TTT' = (T', \tau', \sigma')$ be a countable second-countable bi-topological space with bi-computable numbering $x'$ so that $\TTT'$ is effectively pairwise regular. Then every effective operator $\fun{F}{T}{T'}$ is effectively bi-continuous.
\end{theorem}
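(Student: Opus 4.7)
The plan is to derive the theorem by two applications of Theorem~\ref{thm-contbitop}, one for the topology pair $(\tau, \tau')$ and one for $(\sigma, \sigma')$. By definition, effective bi-continuity of $F$ is precisely the conjunction of effective continuity with respect to each of these two pairs, so establishing them separately suffices and the result of the theorem is immediate from the two conclusions.

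For the pair $(\tau, \tau')$ I would unpack the bi-hypotheses into their single-topology components. Bi-effectivity of $\TTT$ gives that $(T, \tau)$ is effective; computable bi-separability supplies a subset of $T$ that is enumerable under $x$ and dense with respect to $\tau$, so $(T, \tau)$ is computably separable; and bi-acceptability of $x$ specialises to acceptability of $x$ with respect to $\tau$, with $\tau$-computability being explicit in the definition of bi-computable and effective $\tau$-limit passing obtained by pairing any given $\tau$-normed computable convergent enumeration with a uniformly computable $\sigma$-normed enumeration converging to the same point (available because $x$ is also $\sigma$-computable and $(T, \sigma)$ is effective), then feeding the pair into the bi-limit passing function. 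On the target side, $\TTT'$ is countable second-countable by hypothesis, $x'$ is $\tau'$-computable as part of bi-computability, and effective pairwise regularity of $\TTT'$ directly supplies effective regularity of $\tau'$ with respect to $\sigma'$. Since $F$ is an effective operator with respect to $(x, x')$, Theorem~\ref{thm-contbitop} yields effective continuity of $F$ with respect to $\tau$ and $\tau'$.

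The argument for the pair $(\sigma, \sigma')$ is completely symmetric: bi-effectivity, bi-acceptability, bi-computability, computable bi-separability, and effective pairwise regularity are all symmetric in their two component topologies, so running through the same verification with $\tau,\tau'$ replaced everywhere by $\sigma,\sigma'$ produces effective continuity of $F$ with respect to $\sigma$ and $\sigma'$. Combining the two conclusions gives the desired effective bi-continuity.

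The step that warrants the most care is the passage from the joint effective bi-limit passing datum to single-topology acceptability of $x$ needed by Theorem~\ref{thm-contbitop}; this is the only place where the full bi-effectivity of $\TTT$ and bi-computability of $x$ are used essentially, since turning a lone $\tau$-approximation to $y$ into a matching pair requires uniformly listing the $\sigma$-neighbourhood filter of the still-unnamed limit point via the strong-basis structure on $(T,\sigma)$ before bi-limit passing can be invoked.
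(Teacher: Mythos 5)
Your overall plan --- two applications of Theorem~\ref{thm-contbitop}, one for the pair $(\tau,\tau')$ and one for $(\sigma,\sigma')$ --- is the decomposition the paper intends, and most of your unpacking of the ``bi'' hypotheses is correct. The gap is exactly at the step you single out: bi-acceptability of $x$ does \emph{not} yield acceptability of $x$ with respect to $\tau$ alone, and your proposed repair is circular. To list a strong basis of the $\sigma$-neighbourhood filter of the limit point $y$ using the $\sigma$-computability of $x$ you need an $x$-index of $y$ --- but producing such an index is precisely what limit passing is supposed to deliver; the effectivity of $(T,\sigma)$ (the function $sb$) only refines $\sigma$-basic open sets already known to contain a \emph{named} point and gives no access to the $\sigma$-filter of a point specified only by a $\tau$-approximation. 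The failure is concrete in the paper's own running example $(\RR_{c},\UUU,\LLL)$ with the bi-acceptable numbering $x^{U}\ast x^{L}$: a normed computable $\UUU$-enumeration converging to $y$ is a left-approximation of $y$, and there is no uniform procedure turning it into a right-approximation (a normed $\LLL$-enumeration), so $x^{U}\ast x^{L}$ allows effective bi-limit passing but does not allow effective limit passing with respect to $\UUU$. Hence the hypotheses of Theorem~\ref{thm-contbitop} are not available in the form you claim.

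There are two ways to close the gap. Either strengthen the hypothesis of Theorem~\ref{thm-marcont} to ``$x$ is acceptable with respect to each of $\tau$ and $\sigma$ separately'' (then your two applications of Theorem~\ref{thm-contbitop} go through verbatim), or keep bi-acceptability and rework the recursion-theoretic proof of Theorem~\ref{general-cont} in the bi-topological setting: the construction must output a \emph{pair} of normed computable enumerations, one per topology, converging to the same point, to which the bi-limit-passing function can then be applied. The latter is feasible because every point entering that construction (the given $x_{i}$ and the witnesses $x_{r(i,n,m)}$ supplied by Proposition~\ref{prop-wit}) already carries an $x$-index, so its $\sigma$-neighbourhood filter is enumerable by bi-computability of $x$; but this is a genuine reworking of the single-topology argument, not a formal specialisation of it.
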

 
\begin{corollary}\label{cor-marmcont}
Let $\TTT = (T, \tau, \sigma)$ and $\TTT' = (T', \tau', \sigma')$  be bi-effective with bi-acceptable numberings so that $\TTT$ is computably bi-separable and $\TTT'$ effectively pairwise regular. Moreover, let $\prec_{B^{\tau'}}$ and $\prec_{B^{\sigma'}}$ be computably enumerable. Then every map $\fun{F}{T}{T'}$ is effectively bi-continuous if and only if $F$ is an effective operator.
\end{corollary}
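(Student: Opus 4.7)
The plan is to obtain the corollary as a direct combination of Theorem~\ref{thm-marcont} (for the ``only if'' direction) with the theorem stated immediately before the corollary (for the ``if'' direction), checking in each case that the standing hypotheses of the corollary subsume those of the quoted theorem. No genuinely new argument is required; the proof is essentially bookkeeping through the definitions of \emph{bi-effective} and \emph{bi-acceptable}.

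First I would handle the direction ``$F$ is an effective operator $\Rightarrow$ $F$ is effectively bi-continuous'' by invoking Theorem~\ref{thm-marcont}. That theorem requires $\TTT$ to be bi-effective and computably bi-separable with bi-acceptable numbering, and $\TTT'$ to be a countable second-countable bi-topological space with bi-computable numbering that is effectively pairwise regular. Every one of these is assumed in the corollary, with the bi-computability of $x'$ supplied by its bi-acceptability (Definition~\ref{dn-bieff}\,(\ref{dn-bieff-1b})) and the countability together with second-countability of $\TTT'$ built into bi-effectivity (Definition~\ref{dn-bieff}\,(\ref{dn-bieff-3})).

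For the converse, I would apply the unnumbered theorem stated just before the corollary. Its hypotheses are: $\TTT$ is a countable second-countable bi-topological space with bi-computable numbering, $\TTT'$ is bi-effective with $\prec_{B^{\tau'}}$ and $\prec_{B^{\sigma'}}$ computably enumerable, and $x'$ allows effective bi-limit passing. Bi-effectivity of $\TTT$ provides countability and second-countability; bi-acceptability of $x$ gives bi-computability; computable enumerability of the two strong inclusions is assumed outright; and effective bi-limit passing for $x'$ is again the remaining component of the bi-acceptability of $x'$.

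The only step that asks for any real care is the role-matching: $\TTT$ and $\TTT'$ play slightly different roles in the two quoted theorems, and one has to keep straight that computable bi-separability of $\TTT$ is used only in the forward direction (where it supplies the dense base needed to extract witnesses for non-inclusion), while enumerability of the two strong inclusions on $\TTT'$ is used only in the reverse direction (where it feeds effective limit passing on the target side). Once this correspondence is spelled out, both implications follow immediately from the two quoted theorems and the corollary is established.
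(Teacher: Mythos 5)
Your proposal is correct and matches the paper's (implicit) proof exactly: the corollary is stated without an explicit argument precisely because it is the conjunction of Theorem~\ref{thm-marcont} and the unnumbered theorem preceding it, and your hypothesis-by-hypothesis verification that the corollary's assumptions (bi-effectivity, bi-acceptability, computable bi-separability of $\TTT$, effective pairwise regularity of $\TTT'$, and the computable enumerability of $\prec_{B^{\tau'}}$ and $\prec_{B^{\sigma'}}$) cover both sets of hypotheses is exactly the intended bookkeeping.
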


As we have seen in the previous section, under rather natural effectivity requirements quasi-metric spaces are effectively pairwise regular. 

\begin{theorem}\label{thm-effqmcont}
For $\nu = 1,2$, let $(X^{(\nu)}, \delta_{\nu})$ be a bi-separable quasi-metric space so that $(X^{(1)}, \delta_{1})$ is lower computable and $(X^{(2)}, \delta_{2})$ is computable. Then every map $\fun{F}{X_{c}^{(1)}}{X_{c}^{(2)}}$  is an effective operator if and only if $F$ is effectively bi-continuous.
\end{theorem}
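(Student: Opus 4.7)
The plan is to recognise the statement as a direct instance of Corollary~\ref{cor-marmcont}, applied to $\TTT = (X^{(1)}_c, \tau_{\delta_1}, \tau_{\delta_1^c})$ and $\TTT' = (X^{(2)}_c, \tau_{\delta_2}, \tau_{\delta_2^c})$, both indexed by the canonical numbering $x^c$ of the computable points. All the real work therefore reduces to verifying, for the two spaces separately, the effectivity hypotheses demanded by that corollary.

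For $\TTT$ I need bi-effectivity, a bi-acceptable numbering, and computable bi-separability. The lemma in Section~\ref{sec-compqm} asserting that $x^c$ is bi-acceptable and $(X_c, \tau_\delta, \tau_{\delta^c})$ is bi-effective supplies the first two items, and its proof only uses lower computability of $\delta_1$. For computable bi-separability, the countable set $\QQQ^{(1)}$ witnessing lower computability is dense with respect to the symmetrisation topology $\tau_{\delta_1^*}$, hence dense in each of the coarser topologies $\tau_{\delta_1}$ and $\tau_{\delta_1^c}$; and the reduction $\beta \le x^c$ established in the same lemma shows that $\QQQ^{(1)}$ is enumerable under $x^c$.

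For $\TTT'$ I additionally need effective pairwise regularity and computable enumerability of the two strong inclusions $\prec_{B^{\tau_{\delta_2}}}$ and $\prec_{B^{\tau_{\delta_2^c}}}$. Bi-effectivity of $\TTT'$ and bi-acceptability of its $x^c$ follow from the same lemma as above. Effective pairwise regularity is exactly Theorem~\ref{thm-effqmetpairreg}, whose proof relies on the full computability of $\delta_2$. Finally, the inclusion $\pair{i,m} \prec_{B^{\tau_{\delta_2}}} \pair{j,n}$ was defined by $\delta_2(\beta_j, \beta_i) + 2^{-m} < 2^{-n}$, which reformulates as $\delta_2(\beta_j, \beta_i) < c \cdot 2^{-e}$ for suitable non-negative integers $c, e$, and is therefore computably enumerable by lower computability of $\delta_2$; the corresponding property of $\prec_{B^{\tau_{\delta_2^c}}}$ is automatic via $\delta_2^c(x,y) = \delta_2(y,x)$.

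With all the hypotheses in place, Corollary~\ref{cor-marmcont} will deliver the desired equivalence. I do not foresee any serious obstacle, since the infrastructure of Section~\ref{sec-compqm} was designed precisely to dovetail with the bi-topological framework of Section~\ref{sec-effspa}. The only point warranting explicit attention is the asymmetry in the hypotheses on the two spaces: lower computability suffices for the source space $\TTT$, because effective pairwise regularity is used only on the target, whereas the stronger full computability of $\delta_2$ is genuinely required on $\TTT'$ in order to invoke Theorem~\ref{thm-effqmetpairreg}.
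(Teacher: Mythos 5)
Your proposal is correct and matches the paper's intended argument: the theorem is stated there without an explicit proof precisely because it is the instantiation of Corollary~\ref{cor-marmcont} at $(X^{(\nu)}_c, \tau_{\delta_\nu}, \tau_{\delta_\nu^c})$, with the Section~\ref{sec-compqm} lemmas supplying bi-effectivity, bi-acceptability of $x^c$, enumerability of the dense base via $\beta \le x^c$, and Theorem~\ref{thm-effqmetpairreg} supplying effective pairwise regularity of the target. Your verification of the individual hypotheses, including the observation that only lower computability of $\delta_1$ is needed on the source side, is accurate.
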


This result extends the classical theorems of Moschovakis \cite{mosch63,mosch64} and Ce\u{\i}tin~\cite{ce} for recursive metric spaces to the quasi-metric case. In Theory of Computing the use of quasi-metric spaces has particularly been propagated by M.B.~Smyth~\cite{sm:88}, since they comprise metric spaces as well as domains, both used for giving a precise meaning to programming language constructs.

\section{Friedberg's counterexample}\label{sec-fried}

Friedberg~\cite{fr} has presented an effective operator $\fun{F}{M}{S}$ that is not continuous. Here, $M$ is the set of computable elements of a computable metric space and $\SSS = (S, \sigma)$ with $S= \{ \bot, \top \}$ and $\sigma = \{\emptyset, \{ \top \}, S \}$ is  \emph{Sierpinski space}. Define
\[
\delta(y, z) = \begin{cases}
                           0 & y = z \vee (y = \bot \wedge z = \top) \\
                           1 & \text{otherwise}.
                			\end{cases}
\]
Then $\delta$ is a quasi-metric on $S$ that generates the given topology $\sigma$. $S$ is dense in itself with respect to $\tau_{\delta}$ as well as $\tau_{\delta^{c}}$. Let 
\[
\beta_{i} = \begin{cases}
			\bot & i = 0 \\
			\top & \text{otherwise}.
			\end{cases}
\] 
be an indexing of the dense base.	
As is easily verified, the quasi-metric space $(S, \delta)$ is  computable. Moreover, the elements of $S$ are computable as well. Note further that
\[
B^{\delta}_{\pair{a,e}} = \ball{\delta}{\beta_{a}}{ 2^{-e}} = \begin{cases}
                                                                                       S & \text{if $a = 0$,} \\
                                                                                       \{ \top \} & \text{otherwise.}
                                                                          \end{cases}
 \]
 Hence, $\pair{a,e} \prec_{B^{\delta}} \pair{a',e'}$, exactly if $a \not= 0$ and $a' = 0$. It follows that  $\prec_{B^{\delta}}$ is computably enumerable.                                                                      

In Friedberg's example, $M$ is indexed by an acceptable numbering. Moreover, $S$ is indexed by a numbering $x^{S}$ that is acceptable with respect to $\sigma$, i.e.\ $\tau_{\delta}$. 
Then the set $\{ \bot \}$ is not completely enumerable. Otherwise, it would be upwards closed under $\prec_{\sigma}$, by Proposition~\ref{enum_spec}. Since $\prec_{\sigma} = \prec_{\tau_{\delta}} = \sqsubseteq_{\delta}$, it would follow that $\top \in \{ \bot \}$, which is not the case. 

Assume now that $x^{S}$ is also computable with respect to $\tau_{\delta^{c}}$. As $\{ \bot \}$ is basic open with respect to $\tau_{\delta^{c}}$, it follows that $\{ \bot \}$ is completely enumerable with respect to $x^{S}$, a contradiction.

\begin{proposition}
In Theorem~\ref{thm-marcont} we cannot dispense with the requirement that numbering $x'$ is also computable with respect to topology $\sigma'$.
\end{proposition}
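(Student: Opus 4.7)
The plan is to assemble Friedberg's example into a counterexample meeting every hypothesis of Theorem~\ref{thm-marcont} apart from the computability of $x'$ with respect to $\sigma'$. I would take as $\TTT$ the computable metric space $M$ viewed as a bi-topological space $(M, \tau, \tau)$ with both topologies equal to the usual metric topology; since it is a recursive metric space, it is bi-effective and computably bi-separable, and its canonical acceptable indexing is trivially bi-acceptable (because the two topologies coincide, computability and effective limit passing with respect to either automatically give the bi-versions).

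Next I would take as $\TTT'$ the Sierpinski space $(S, \tau_{\delta}, \tau_{\delta^{c}})$ with $\delta$ the quasi-metric defined just before the proposition. The text preceding the proposition verifies that $(S, \delta)$ is a computable quasi-metric space in the sense of Definition~\ref{dn-compqmet}, whose points are all computable and for which $\prec_{B^{\delta}}$ is computably enumerable. Theorem~\ref{thm-effqmetpairreg} then supplies exactly the bi-effectivity and the effective pairwise regularity of $\TTT'$ demanded by Theorem~\ref{thm-marcont}. The numbering $x^{S}$ from Friedberg's construction is, by hypothesis, acceptable with respect to $\tau_{\delta}$; but the argument displayed in the paragraph before the proposition shows that $x^{S}$ is \emph{not} computable with respect to $\tau_{\delta^{c}}$, since $\{\bot\}$ is $\tau_{\delta^{c}}$-basic-open yet (by Proposition~\ref{enum_spec}) cannot be completely enumerable with respect to $x^{S}$. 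Hence $x^{S}$ meets every clause demanded of $x'$ in Theorem~\ref{thm-marcont} \emph{except} bi-computability.

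Finally, I would invoke Friedberg's operator $\fun{F}{M}{S}$, which is effective but discontinuous with respect to the metric topology and $\tau_{\delta}$. Since an effectively bi-continuous map is in particular effectively continuous in the first component, $F$ cannot be effectively bi-continuous. Thus Theorem~\ref{thm-marcont} fails for the data $(\TTT, \TTT', x, x^{S}, F)$, and the proposition follows.

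The main obstacle is purely one of bookkeeping: one must confirm that each effectivity clause of Theorem~\ref{thm-marcont} except the one at issue can actually be extracted from the material already collected for Friedberg's example (in particular, the computability of $(S, \delta)$ and the acceptability of $x^{S}$ with respect to $\tau_{\delta}$). No new construction or estimate is required.
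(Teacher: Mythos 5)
Your proposal is correct and follows essentially the same route as the paper: the paper's own justification is exactly the discussion preceding the proposition, namely that in Friedberg's example every hypothesis of Theorem~\ref{thm-marcont} can be met except the computability of $x^{S}$ with respect to $\tau_{\delta^{c}}$ (which is impossible, since $\{\bot\}$ would then be completely enumerable, contradicting Proposition~\ref{enum_spec}), while $F$ is effective but not continuous. The only detail worth making explicit is that the effective pairwise regularity of $(S,\tau_{\delta},\tau_{\delta^{c}})$ should be verified relative to $x^{S}$ rather than imported from Theorem~\ref{thm-effqmetpairreg}, which is stated for the numbering $x^{c}$ (not equivalent to $x^{S}$, as $x^{c}$ is bi-computable); this is immediate here because the witnessing functions for Sierpinski space can be chosen independently of the point index.
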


\section{Conclusion}\label{sec-conc}

Bi-topological spaces are not just an extension of topological spaces by adding a further topology on the underlying set. Many important notions and results naturally extend to the bi-topological case. Moreover, spaces of this kind have turned out to allow for interesting representations of symmetric (i.e.\ Hausdorff) topologies by two asymmetric ones.

In the present paper the framework of effective spaces developed in~\cite{sp} was extended to the bi-topological case and the question of whether effective operators are effectively continuous was considered. It was shown that under the effectivity conditions that are always satisfied in applications, an effective operator $\fun{F}{\TTT}{\TTT'}$, where $\TTT = (T, \tau)$ is an effective topological space and $\TTT' = (T', \tau', \sigma')$ a bi-topological space, is effectively continuous with respect to $\tau$ and $\tau'$, if $\tau'$ is effectively regular with respect to $\sigma'$. 

Interestingly, it turned out that though the continuity is stated for $F$ as a map from $(T, \tau)$ to $(T', \tau')$ and topology $\sigma'$ is more or less considered as an auxiliary topology, one nevertheless has to require each of the general effectivity conditions simultaneously for both topologies, $\tau'$ and $\sigma'$. It thus follows that if also $\TTT = (T, \tau, \sigma)$ is a bi-topological space and $\TTT'$ is effectively pairwise regular, then $F$ is even effectively bi-continuous. Conversely, it was shown that effectively bi-continuous operators are also effective, independently of any regularity requirement.

As an important example, computable quasi-metric spaces were studied. The topologies induced by the given quasi-metric and its conjugate are considered in this case. All general effectivity requirements that had to be made in the general study are now satisfied. Moreover, such spaces are effectively pairwise regular.

\bibliographystyle{plain}

\end{document}